\documentclass[12pt,reqno]{amsart}
\usepackage{mathrsfs}
\usepackage[breaklinks]{hyperref}
\usepackage[headheight=110pt,top=1.3in, bottom=1.2in, left=1in, right=1in]{geometry}

\usepackage{url}
\usepackage{amssymb}
\usepackage{amsmath}
\allowdisplaybreaks

\renewcommand{\Re}{\operatorname{Re}}

\newcommand{\g}{\gamma}

\newcommand{\z}{\zeta}

\renewcommand{\(}{\left(}
\renewcommand{\)}{\right)}
\renewcommand{\[}{\left\[}
\renewcommand{\]}{\right\]}
\newcommand{\Z}{\mathbb{Z}}
\newcommand{\K}{\mathbb{K}}

\newcommand{\C}{\mathbb{C}}

\renewcommand{\ge}{\geqslant}

\numberwithin{equation}{section}
 \theoremstyle{plain}
\newtheorem{theorem}{Theorem}[section]
\newtheorem{lemma}[theorem]{Lemma}
\newtheorem{corollary}[theorem]{Corollary}
\newtheorem{proposition}[theorem]{Proposition}

   \makeatletter
\def\proof{\@ifnextchar[{\@oproof}{\@nproof}}
\def\@oproof[#1][#2]{\trivlist\item[\hskip\labelsep\textit{#2 Proof of\
#1.}~]\ignorespaces}
\def\@nproof{\trivlist\item[\hskip\labelsep\textit{Proof.}~]\ignorespaces}

\makeatother

\makeatletter
\def\@tocline#1#2#3#4#5#6#7{\relax
  \ifnum #1>\c@tocdepth 
  \else
    \par \addpenalty\@secpenalty\addvspace{#2}%
    \begingroup \hyphenpenalty\@M
    \@ifempty{#4}{%
      \@tempdima\csname r@tocindent\number#1\endcsname\relax
    }{%
      \@tempdima#4\relax
    }%
    \parindent\z@ \leftskip#3\relax \advance\leftskip\@tempdima\relax
    \rightskip\@pnumwidth plus4em \parfillskip-\@pnumwidth
    #5\leavevmode\hskip-\@tempdima
      \ifcase #1
       \or\or \hskip 1em \or \hskip 2em \else \hskip 3em \fi%
      #6\nobreak\relax
    \dotfill\hbox to\@pnumwidth{\@tocpagenum{#7}}\par
    \nobreak
    \endgroup
  \fi}
\makeatother

\usepackage{bigints}
\usepackage{suffix}
\usepackage{mathtools}
\DeclarePairedDelimiterX\MeijerM[3]{\lparen}{\rparen}%
{\begin{smallmatrix}#1 \\ #2\end{smallmatrix}\delimsize\vert\,#3}

\newcommand\MeijerG[8][]{%
  G^{\,#2,#3}_{#4,#5}\MeijerM[#1]{#6}{#7}{#8}}

\WithSuffix\newcommand\MeijerG*[7]{%
  G^{\,#1,#2}_{#3,#4}\MeijerM*{#5}{#6}{#7}}
  
  \usepackage{color}
\usepackage{amsmath}

\definecolor{blue}{rgb}{0,0,1}
\definecolor{red}{rgb}{1,0,0}
\definecolor{green}{rgb}{0,.6,.2}
\definecolor{purple}{rgb}{1,0,1}

\numberwithin{theorem}{section}
\numberwithin{equation}{section}

\begin{document}
\title[Laurent coefficients of Zagier-type zeta function]{Laurent coefficients of Zagier-type zeta function {\`a} la Ishibashi and 
arithmetic aspects of extended Ramanujan period function}
\author{Soumyarup Banerjee and Riya Mandal}\thanks{2020 \textit{Mathematics Subject Classification.} Primary 11M41, 39B32; Secondary 33E20. \\
\textit{Keywords and phrases.} Laurent series, Herglotz function, Functional equations, Ramanujan period function}
\address{Department of Mathematics, Indian Institute of Technology Kharagpur, Kharagpur, Midnapore - 721302, West Bengal, India.}
\email{soumyarup@maths.iitkgp.ac.in}

\address{Department of Mathematics, Indian Institute of Technology Kharagpur, Kharagpur, Midnapore - 721302, West Bengal, India.}
\email{riyamandaljkc@gmail.com}

\begin{abstract}
One of the remarkable contributions of Don Zagier was the  Kronecker limit formula for a real quadratic field, where he connects the double series $\mathcal{Z}(s,w,w^\prime)$ to the Dedekind zeta function associated to a real quadratic field. Later, Ishibashi determined all the Laurent coefficients of $\mathcal{Z}(s,w,w^\prime)$ at $s=1$. Recently, Choie and kumar have studied the analytic behaviour of the analogous double series $\tilde{\mathcal{Z}}(s,w,w^\prime)$. In this article, we derive all the Laurent coefficients of $\tilde{\mathcal{Z}}(s,w,w^\prime)$, akin to Ishibashi. These Laurent coefficients involve an interesting function $\mathfrak{F}_k^0(x)$, which was earlier studied by Dixit et. al. (Ramanujan for $k=1$), where they obtained a beautiful symmetric relation for $\mathfrak{F}_k^0(x)$. We establish both the two term and the three term functional equation of $\mathfrak{F}_k^0(x)$, derive the action of the period-like Hecke operator on $\mathfrak{F}_k^0(x)$ and connect an important integral with $\mathfrak{F}_k^0(x)$.
\end{abstract}
\maketitle
\vspace{-0.8cm}

\section{Introduction}\label{intro}
The Laurent series of a Dedekind zeta function associated to any number field is a fundamental object of study in number theory. It finds applications not only in different branches of mathematics, such as geometry \cite{McIntyre}, but also in physics, for instance, in string theory, where it appears in one-loop computations within Polyakov’s perturbative framework (cf. \cite{Dhoker} and references therein). For any number field $\K$, if $A$ denotes an ideal class from the ideal class group of $\K$, then the Dedekind zeta function $\zeta_\K(s)$ can be decomposed as $\zeta_\K(s)= \sum_A \zeta(s, A)$, where for $\Re(s)>1$, $\zeta(s, A) = \sum_{\mathrm{a}\in A} \frac{1}{\mathcal{N}(\mathrm{a})^s}$, with $\mathcal{N}(\mathrm{a})$ being the norm of the ideal $\mathrm{a}$. The constant term in the Laurent series of $\zeta(s, A)$ at the pole $s=1$ can be described by the Kronecker limit formula, which was named after Kronecker \cite{Kronecker} for his contribution in the case of an imaginary quadratic field. Hecke \cite{Hecke} derived the corresponding formula in the case of a real quadratic field, however, it was not fully explicit, as one of the terms was left in the form of an integral involving the Dedekind eta-function. There are numerous Kronecker-type limit formulas available in the literature (cf. \cite{Cogdell}, \cite{Jorgenson}, \cite{Liu}, \cite{Ramachandra}, \cite{Vlasenko}).

In a seminal paper \cite{Zagier}, Zagier derived an explicit formulation of the Kronecker limit formula for real quadratic fields.
To state the result, we need to fix some notations. Let $\K$ be a real quadratic field with discriminant $D>0$. We call a number $w\in \K $ reduced if its conjugate $w'$ satisfies the relation $w >1>w'>0$. It is well-known that a number in $\K$ is reduced if and only if it's associated continued fraction is pure periodic. We refer \cite[p. 163]{Zagier} for more details. Now, if $\mathscr{B}$ is a narrow ideal class of $\K$ with length $r$ associated to the cycle $((b_1,...,b_r))$ with $b_i\in \Z, b_i\ge 2$, then there are exactly $r$ many reduced numbers $w_k$ with $1\le k \le r$, which has pure periodic continued fraction associated to the cycle $((b_1,...,b_r))$, for which $\{1,w_k\}$ is a basis for some ideal in $\mathscr{B}$. Zagier \cite{Zagier} first expressed the partial zeta function $\zeta(s,\mathscr{B})$ as
\begin{align*}
 		\zeta(s,\mathscr{B})= D^{-\frac{s}{2}}\sum_{k=1}^r \mathcal{Z}(s,w_k,w_k^\prime),
 \end{align*}
where the function
\begin{align*}
	\mathcal{Z}(s,w,w^\prime):=\sum_{p=1}^{\infty}\sum_{q=0}^{\infty} \bigg[\frac{(w-w^\prime)}{(q+pw)(q+pw^\prime)}\bigg]^s \quad \qquad (\Re(s)>1),
	\end{align*}
can be termed as Zagier's zeta function. This function can be continued analytically in the plane $\Re(s)>\frac{1}{2}$, with a simple pole at $s=1$. He evaluated the constant term in the Laurent series of $\mathcal{Z}(s,w,w^\prime)$ at $s=1$ to derive the Kronecker limit formula, which precisely states as 
\begin{align}\label{Zagier result}
 		\mathcal{Z}(s,w,w^\prime) =\frac{\frac{1}{2}\log(w/w')}{s-1}+ P(w, w') +\mathcal{O}(s-1),
 \end{align} 
where the function $P(x, y)$ for $x>y>0$ is defined as
\begin{align*}
P(x, y):= F(x) - F(y) + \operatorname{Li}_2\left(\frac{y}{x}\right)- \frac{\pi^2}{6} + \log \left(\frac{x}{y}\right)\left(\gamma - \frac{1}{2}\log(x-y) + \frac{1}{4}\log\!\left(\frac{x}{y}\right)\right).
\end{align*}
Here $\gamma$ denotes the Euler's constant, $\mathrm{Li}_s(z)$ is the Polylogarithm function
\begin{equation}\label{Polylog series}
\mathrm{Li}_s(z):=\sum_{n=0}^{\infty}\frac{z^n}{n^s},
\end{equation}
defined for any complex numbers $s, z$ with $|z| < 1$ and
\begin{align}\label{Herglotz}
F(x):= \sum_{n=1}^{\infty} \frac{\psi(nx) - \log(nx)}{n} \quad \qquad (x \in \C\setminus (-\infty, 0]), 
\end{align}
where $\psi(s) = \frac{\Gamma'(s)}{\Gamma(s)}$ is the logarithmic derivative of the gamma function. The convergence of the series in \eqref{Herglotz} follows from the asymptotic expansion of $\psi(s)$ given by \cite[p. 86, Equation (6)]{Maier}
\begin{align*}
\psi(s) = \log(s)+ \mathcal{O}\left(\frac{1}{s}\right) \qquad \quad (|\arg(s)|< \pi)
\end{align*}
as $s\to \infty$. The function similar to $F(x)$ was independently studied by Herglotz \cite{Herglotz}, which leads Readchenko and Zagier \cite{Radchenko} to term the function as the {\it Herglotz function}.

Egami \cite{Egami} simplifies Zagier's proof of Kronecker limit formula given in  \eqref{Zagier result}, by introducing a nice decomposition of the function $\mathcal{Z}(s,w,w^\prime)$. Later, Ishibashi \cite[Theorem 3]{Ish03} applied Egami's decomposition to determine all the Laurent coefficients of $\mathcal{Z}(s,w,w^\prime)$ at $s=1$.
The representations of the Laurent coefficients of $\mathcal{Z}(s,w,w^\prime)$ contains the integral
 	\begin{align}\label{int I}
 		I_{a,b,c}(u,v)=\int_{v}^{u}\frac{\log^ax\log^b(u-x)\log^c(x-v)}{x} dx,
 	\end{align}
defined for $u>v>0$, $a,b,c \in \mathbb{N}_0$ and the $k$-th order Herglotz function
 \begin{align*}
 	\Phi_k(x)=\sum_{n=1}^{\infty} \frac{k\psi_{k-1}(nx)-\log^k{(nx)}}{n} \quad \qquad (x \in \mathbb{C}\setminus (-\infty,0]),
 	 \end{align*} 
 	 where $\psi_k(x) = \frac{\Gamma_k'(x)}{\Gamma_k(x)} $ is the logarithmic derivative of the generalized gamma function $\Gamma_k(x)$, given by
 	 \begin{align*}
 	 	\Gamma_k(x):= \lim_{n \to \infty} \frac{\exp{(\frac{\log^{k+1}(n)}{k+1}x)}\prod_{j=1}^{n}\exp{(\frac{\log^{k+1}(j)}{k+1})}}{\prod_{j=0}^{n}\exp{(\frac{\log^{k+1}(j+z)}{k+1})}},
 	 \end{align*}
which was introduced by Dilcher in \cite{Dilcher}. The function $\psi_k(x)$ has significant importance in number theory as it is directly connected to the generalized Stieltjes constant $\gamma_k(x)$ via the relation $\psi_k(x) = - \gamma_k(x)$. In a beautiful article \cite{Dixit}, Dixit et. al. obtained the two term functional equation of the function $\Phi_k(x)$ and the three term functional equation for its derivative.
 
Recently, Choie and Kumar \cite{Choie} investigated a Kronecker limit formula analogous to the Zagier’s  result \eqref{Zagier result}. They considered the function 
	\begin{align*}
		\tilde{\mathcal{Z}}(s,w,w^\prime):=\sum_{p=1}^{\infty}\sum_{q=0}^{\infty} \bigg[\frac{p(w-w^\prime)}{(q+pw)(q+pw^\prime)}\bigg]^s,
	\end{align*}
which is defined in the region $\Re(s)>2$ and has an analytic continuation in the plane $\Re(s)>\frac{1}{2}$ with  two simple poles at $s=1$ and $s=2$. They evaluated the coefficient of the principal part and the constant term in the Laurent expansion of $\tilde{\mathcal{Z}}(s,w,w^\prime)$ at $s=1$, which precisely states as  
	\begin{align}\label{Kronecker of ztilde}
		\tilde{\mathcal{Z}}(s,w,w^\prime)= \frac{\frac{w-w^\prime}{2ww^\prime}}{s-1}+\mathfrak{F}_1(w)-\mathfrak{F}_1(w')-\frac{1}{2}\log{\frac{w}{w'}}+&\frac{w-w^\prime}{2ww^\prime}\bigg[\gamma+\log{\frac{w-w'}{ww'}}\bigg]+\mathcal{O}(s-1),
	\end{align}
where 
\begin{align}\label{Ramanujan-F1-Function}
	\mathfrak{F}_1(x):= \sum_{n=1}^{\infty} \Bigg[\psi(nx)+\frac{1}{2nx}-\log(nx)\bigg] \quad \qquad \left(x \in \mathbb{C}\setminus(-\infty,0]\right).
\end{align}
The above function was previously studied by Ramanujan \cite{Ramanujan}, where on page 220, he recorded an elegant symmetric relation
\begin{align}\label{Ramanujan modular relation}
	\sqrt{x}\left\{\mathfrak{F}_1(x)+\frac{\gamma-\log(2\pi x)}{2x}\right\}=\sqrt{1/x} \left\{ \mathfrak{F}_1\left(\frac{1}{x}\right)+ \frac{\gamma-\log(\frac{2\pi}{x})}{2/x}\right\}.
\end{align} 
Later, Choie and Kumar \cite{Choie} explore the other arithmetic properties of the above function $\mathfrak{F}_1(x)$.

In this article, our primary goal is to determine all the Laurent coefficients of $\tilde{\mathcal{Z}}(s,w,w^\prime)$ at $s=1$ and for that we need to consider the function
\begin{align}\label{New-Fk}
\mathfrak{F}_k(x):=	\sum_{r=0}^{k-1}\binom{k-1}{r}	\sum_{n=1}^{\infty}\log^r\left(\frac{1}{\sqrt{n}}\right)\left[\psi_{k-r-1}(nx) + \frac{\log^{k-r-1}(nx)}{2nx} -\frac{\log^{k-r}(nx)}{k-r}\right],
\end{align}
which is defined for any positive integer $k$ and $x \in \mathbb{C}\setminus(-\infty,0]$. Clearly, for $k=1$, the above function reduces to \eqref{Ramanujan-F1-Function}. The convergence of the series, appeared in \eqref{New-Fk}, follows from the asymptotic expansion of $\psi_{k-r-1}(s)$ \cite[Proposition 3]{Coffey}, which states that (cf. \cite[p. 7, Equation (2.3)]{DSS24}), for $|\arg(s)|<\pi$, as $s\to \infty$,
        \begin{align*}
    \psi_{k-r-1}(s) \sim \frac{\log^{k-r}(s)}{k-r}&-\frac{1}{2s}\log^{k-r-1}(s) \\
    &+ \sum_{m=1}^{\infty} \frac{B_{2m}}{(2m)!}s^{-2m}\sum_{t=0}^{k-r-1}\binom{k-r-1}{t} t! s(2m,t+1) \log^{k-r-1-t}(s),
        \end{align*} 
where $s(n,k)$ denotes the Stirling number of the first kind and $B_m$ denotes the $m$-th Bernoulli number. Moreover, the above asymptotic expansion readily implies that for $|\arg(x)|<\pi$, as $x\to \infty$,
\begin{align*}
\mathfrak{F}_k(x)\sim \sum_{r=0}^{k-1}\binom{k-1}{r}\sum_{n=1}^{\infty}\log^r\left(\frac{1}{\sqrt{n}}\right)&\sum_{m=1}^{\infty} \frac{B_{2m}}{(2m)!}(nx)^{-2m}\\
\times&\sum_{t=0}^{k-r-1}\binom{k-r-1}{t} t! s(2m,t+1)\log^{k-r-1-t}(nx).
\end{align*}
We next define two important integrals
\begin{align}
		&\tilde{H}(u,v):=-\int_{0}^{\infty}t^{v-1}\bigg(\frac{1}{e^t-1}-\frac{1}{t}+\frac{1}{2}\bigg)\mathrm{Li}_{-\frac{(v-1)}{2}}(e^{-ut}) dt \qquad \quad  (u>0, v>-1), \label{Hfun}\\
&J_{a,b,c}(u,v):=\int_{v}^{u}\frac{\log^ax\log^b(u-x)\log^c(x-v)}{x^2} dx\qquad \qquad \quad  (u>v>0), 	\label{int J}
\end{align}
where $a, b, c$ are any non-negative integers. We are now ready to state our first result, where we evaluate  all the Laurent coefficients of $\tilde{\mathcal{Z}}(s,w,w^\prime)$ at $s=1$.
\begin{theorem}\label{Thm-1.2}
The Laurent series expansion of $\tilde{\mathcal{Z}}(s,w,w^\prime)$ at $s=1$ is given by
	\begin{align*}
		\tilde{\mathcal{Z}}(s,w,w^\prime)=\sum_{k=-1}^{\infty}\tilde{P}_k(w,w^\prime)(s-1)^k \quad\qquad \text{with} \quad \tilde{P}_k(w,w^\prime)=\sum_{i+j=k}a_i(b_j+c_j+d_j),
	\end{align*}
	where 
\begin{align}
		a_i&= \sum_{l+m=i}\frac{(-1)^l}{l!} \sum_{\sum_{r=1}^mrs_r=m} \bigg[\prod_{j=1}^m (s_j+1)\bigg(\frac{-\Gamma^{(j)}(1)}{j!}\bigg)^{s_j}\bigg]\log^l(w-w^\prime) \hspace{1.8cm} (i\ge 0),\nonumber\\
		b_j&=\frac{2^j}{j!}\sum_{l=0}^j b_{j,l+1}\bigg[	\mathfrak{F}_{l+1}(w)-	\mathfrak{F}_{l+1}(w')\bigg]\nonumber\\
		&\hspace{2.1cm}+\frac{1}{j!}\sum_{r=0}^{j-1} \binom{j}{r} 2^r \int_{w^\prime}^w \log^{j-r}\{(w-u)(u-w^\prime)\} \frac{ \partial}{\partial u}[\partial^r_s\tilde{H}(u,1)] du \quad (j\ge 0),\nonumber\\
		c_j&=\sum_{l_1+l_2+l_3=j}\sum_{q+r\le l_3} \frac{(-1)^r 2^{l_2+r}\z^{(l_1)}(0)\Gamma^{(l_2)}(1)}{l_1!l_2!l_3!} \binom{l_3}{r}\binom{l_3-r}{q} I_{r,q,l_3-r-q}(w,w^\prime) \quad (j\ge 0),\nonumber\\
		d_j&=\sum_{\substack{l_1+l_2+l_3=j \\ l_1\ge -1}}\sum_{q+r\le l_3}\frac{(-1)^r(2)^{r+l_2} \tilde{\gamma}_{l_1}\Gamma^{(l_2)}(2)}{l_2! l_3!}\binom{l_3}{r} \binom{l_3-r}{q} J_{r,q,l_3-r-q}(w,w^\prime) \quad (j\ge -1).\nonumber
	\end{align}
Here $b_{j,l+1}$ are the computable constants, defined in Section \ref{Laurent series} at \eqref{bkj} and $\tilde{\gamma}_{m}$ denotes the $m$-th Laurent coefficient of $\zeta(s)$ at $s=1$.
\end{theorem}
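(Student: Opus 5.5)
We follow Egami's decomposition, as adapted by Ishibashi, applied to $\tilde{\mathcal{Z}}$. The structure of the statement already tells us how the proof should go. The factor $\sum_i a_i(s-1)^i$ should be the Taylor expansion at $s=1$ of
\[
(w-w')^{s-1}/\Gamma(s),
\]
or an equivalent normalising factor. Indeed, expanding $1/\Gamma(s)$ through the reciprocal of $\sum_j \Gamma^{(j)}(1)(s-1)^j/j!$ produces exactly the products $\prod_j(s_j+1)(-\Gamma^{(j)}(1)/j!)^{s_j}$. Expanding $(w-w')^{-(s-1)}$ gives the factor $(-1)^l\log^l(w-w')/l!$. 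So the first step is to write
\[
\tilde{\mathcal{Z}}(s,w,w')=\frac{(w-w')^{s}}{\Gamma(s)}\,\mathcal{G}(s)
\]
and reduce the problem to expanding $\mathcal{G}(s)$, or a suitable normalisation of it, as $B(s)+C(s)+D(s)$.

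To obtain $\mathcal{G}$, we would use Egami's trick. Write
\[
\frac{p(w-w')}{(q+pw)(q+pw')}=p\left(\frac{1}{q+pw'}-\frac{1}{q+pw}\right),
\]
and represent the $s$-th power through a one-variable integral over $u\in[w',w]$ of $((w-u)(u-w'))^{s-1}(q+pu)^{-2s}$, up to Beta factors. This is Egami's Feynman-type parametrisation: the same identity that introduces $\log\{(w-u)(u-w')\}$ and the integrals $I_{a,b,c}$ and $J_{a,b,c}$ after differentiation in $s$. Summing over $q$ then gives Hurwitz zeta values $\zeta(2s,pu)$, weighted by $p^{s}$. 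Next we split $\zeta(2s,pu)$ with the Euler–Maclaurin/Hermite-type formula:
\[
\zeta(2s,x)=\frac{x^{1-2s}}{2s-1}+\frac{x^{-2s}}{2}+\frac{1}{\Gamma(2s)}\int_0^\infty t^{2s-1}\Big(\frac{1}{e^t-1}-\frac1t+\frac12\Big)e^{-xt}\,dt .
\]
Summing over $p$ produces three pieces:
\begin{itemize}
\item the $x^{1-2s}$ term gives $\zeta(s-1)\,u^{1-2s}$, which has a pole at $s=2$ and, paired with the Beta/Gamma factors (hence $\Gamma^{(l_2)}(2)$ and $\tilde\gamma_{l_1}$), yields $D(s)$ with the $J$-integrals;
\item the $x^{-2s}/2$ term gives $\zeta(s)u^{-2s}$; after rescaling, the factor $\zeta(s)$ combines with $\Gamma(s)$ to give $\zeta^{(l_1)}(0)$-type data together with the $I$-integrals, hence $C(s)$;
\item the integral term gives $\sum_p p^{s}e^{-put}=\mathrm{Li}_{-s}(e^{-ut})$; at the relevant specialisation this is exactly $\tilde H(u,\cdot)$ from \eqref{Hfun}, which yields $B(s)$.
\end{itemize}
We would fix the exact parametrisation so that the variable $v$ of $\tilde H$ tracks $2s-1$, matching $\mathrm{Li}_{-(v-1)/2}$ and the factors $2^r$.

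Then we would expand each piece in powers of $s-1$. For $C$ and $D$, expand $((w-u)(u-w'))^{s-1}$ and $u^{-2s}$, $u^{1-2s}$ by the binomial theorem in logarithms, writing $\log(u-w')$ and so on. The resulting multinomial sums reproduce the stated coefficients with $I_{r,q,l_3-r-q}$ and $J_{r,q,l_3-r-q}$; the shift $l_1\ge -1$ accounts for the pole of $\zeta$. For $B$, we would apply Leibniz to
\[
\int_{w'}^{w}\big((w-u)(u-w')\big)^{s-1}\,\partial_u \tilde H(u,2s-1)\,du,
\]
after an integration by parts in $u$ that creates $\partial/\partial u$. The terms in which all derivatives fall on the logarithm give the displayed integrals with $r\le j-1$. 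The term $r=j$, where the logarithmic factor is absent, integrates exactly to $\partial_s^j\tilde H(w,1)-\partial_s^j\tilde H(w',1)$.

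The main obstacle is to identify $\partial_s^{l}\tilde H(x,1)$ with the linear combination $\sum_l b_{j,l+1}\mathfrak F_{l+1}(x)$. For this we would expand
\[
\frac{1}{e^t-1}-\frac1t+\frac12=\sum_{m\ge1}\frac{2t}{t^2+4\pi^2m^2}
\]
or use $\mathrm{Li}_{-a}(e^{-ut})=\sum_n n^{a}e^{-nut}$ and integrate term by term. Each term gives a Mellin-type integral $\int_0^\infty t^{2s-2}(\cdots)e^{-nut}\,dt$, which is essentially $\Gamma(2s-1)\big[\zeta(2s-1,nu)-\tfrac{(nu)^{2-2s}}{2s-2}-\tfrac{(nu)^{1-2s}}{2}\big]$, multiplied by $n^{s-1}$. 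Differentiating $l$ times at $s=1$ and using $\partial_a^{m}\zeta(a,x)|_{a=1}$ in terms of $\psi_m(x)$ (the Dilcher/Stieltjes relation $\psi_m=-\gamma_m$) produces the summands $\log^r(1/\sqrt n)[\psi_{k-r-1}(nx)+\cdots]$ of \eqref{New-Fk}; the factor $n^{s-1}$ accounts for $\log^r(1/\sqrt n)$ after the rescaling $2(s-1)$. Collecting the derivatives of $\Gamma(2s-1)$ then defines the constants $b_{j,l+1}$. Interchanging sums, integrals and derivatives is justified by the stated asymptotics of $\psi_{k-r-1}$. Finally, we multiply the three expansions by $\sum a_i(s-1)^i$ and read off $\tilde P_k$; the case $k=-1$ reduces to $a_0d_{-1}$, which should match \eqref{Kronecker of ztilde}.
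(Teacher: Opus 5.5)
Your overall plan is the paper's own. The Feynman parametrisation followed by the Hermite-type splitting of $\zeta(2s,pu)$ is what the paper obtains from the double Gamma integral of Lemma~\ref{Lem:2.1}, the substitution $u+v=t$, $w'u+wv=tx$, and the splitting of $1/(1-e^{-t})$; it leads to Proposition~\ref{Prop2.2}. As written, however, two steps fail.

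First, the prefactor is not $(w-w')^{s-1}/\Gamma(s)$. The parametrisation carries $\Gamma(2s)/\Gamma(s)^2$ and the Jacobian $(w-w')^{1-2s}$, so $\tilde{\mathcal{Z}}(s,w,w')=\frac{(w-w')^{1-s}}{\Gamma^2(s)}\,\Gamma(2s)\int_{w'}^{w}\{(w-u)(u-w')\}^{s-1}\sum_{p\ge1}p^{s}\zeta(2s,pu)\,du$. The paper takes $A(s)=(w-w')^{1-s}/\Gamma^2(s)$ and absorbs $\Gamma(2s)$ into the three pieces: as $\Gamma(2s-1)$, as $\frac12\Gamma(2s)$, and by cancelling the $1/\Gamma(2s)$ of the integral term. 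With $1/\Gamma(s)$ you would get $a_1=\gamma-\log(w-w')$ instead of $2\gamma-\log(w-w')$, and Corollary~\ref{cor Rahul} would come out wrong. (The $-2\gamma$ printed in that proof is a sign slip; $+2\gamma$ is what makes $\tilde P_0$ match.) The weights $(s_j+1)$ cannot come from $1/\Gamma(s)$. Even for $1/\Gamma^2(s)$, Fa\`a di Bruno gives the weight $(n+1)\binom{n}{s_1,\dots,s_m}$ with $n=\sum_j s_j$, so in the coefficient of $(s-1)^3$ the monomial $c_1c_2$, $c_j=-\Gamma^{(j)}(1)/j!$, has weight $6$, not $4$. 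The paper simply quotes $a_i$ from Ishibashi, so do not expect to derive that product.

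Second, you have swapped the two explicit pieces. The $x^{1-2s}/(2s-1)$ term gives $\Gamma(2s-1)\zeta(s-1)u^{1-2s}$. This is holomorphic at $s=1$ (its pole at $s=2$ is irrelevant), expands in $\zeta^{(l_1)}(0)$ and $2^{l_2}\Gamma^{(l_2)}(1)$, and has weight $u^{-1}$, so it produces the $I_{a,b,c}$, i.e.\ the $c_j$. The $x^{-2s}/2$ term gives $\frac12\Gamma(2s)\zeta(s)u^{-2s}$. This carries the pole through $\tilde\gamma_{-1}$, expands in $\tilde\gamma_{l_1}$ and $2^{l_2}\Gamma^{(l_2)}(2)$, and has weight $u^{-2}$, so it produces the $J_{a,b,c}$, i.e.\ the $d_j$. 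With your assignment $D(s)$ would be regular at $s=1$, forcing $d_{-1}=0$ against $\tilde P_{-1}=\frac{w-w'}{2ww'}$; and $\zeta(s)$ near $s=1$ cannot produce the values $\zeta^{(l_1)}(0)$.

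For $I_1$ no integration by parts is needed. Since $\partial_u\mathrm{Li}_{-(s-1)}(e^{-ut})=-t\,\mathrm{Li}_{-s}(e^{-ut})$ and $v=2s-1$ gives $t^{v-1}=t^{2s-2}$, the integral term is literally $\partial_u\tilde H(u,2s-1)$. Leibniz together with $\partial_s^r[\tilde H(u,2s-1)]_{s=1}=2^r\partial_v^r\tilde H(u,1)$ then gives \eqref{I1coeff}.

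Your identification of $\partial_v^k\tilde H(u,1)$ through the Hurwitz zeta function is a genuinely different route from Lemma~\ref{lem2.3}. The paper uses Ishibashi polynomials and Lemma~\ref{Dixit's-Lemma} to show $\sum_j a_{k+1,j}\partial_v^j\tilde H(u,1)=(-1)^k\mathfrak{F}_{k+1}(u)$, i.e.\ $\partial_v^k[\tilde H(u,v)/\Gamma(v)]_{v=1}=(-1)^k\mathfrak{F}_{k+1}(u)$, and then inverts this triangular system by induction. Your route works. Set $G(v,x)=\zeta(v,x)-\frac{x^{1-v}}{v-1}-\frac{x^{-v}}{2}$; you must differentiate this regularised function, since $\zeta(a,x)$ itself has a pole at $a=1$. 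Then $\tilde H(u,v)=-\Gamma(v)\sum_{n\ge1}n^{(v-1)/2}G(v,nu)$ and $\partial_v^mG(1,x)=(-1)^{m+1}\bigl[\psi_m(x)+\frac{\log^m x}{2x}-\frac{\log^{m+1}x}{m+1}\bigr]$. This yields the closed form $b_{k,j+1}=(-1)^j\binom{k}{j}\Gamma^{(k-j)}(1)$ and avoids the induction. Because the statement defines $b_{j,l+1}$ by the recursion \eqref{bkj}, you still have to check agreement. It follows from $a_{k+1,r}=\binom{k}{r}(1/\Gamma)^{(k-r)}(1)$ and Leibniz applied to $\Gamma\cdot\Gamma^{-1}=1$.
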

Note that, $\tilde{P}_{-1}(w,w')$ and $\tilde{P}_0(w,w')$ from the above theorem denotes the coefficient of the principal part and the constant term respectively in the Laurent expansion of $\tilde{\mathcal{Z}}(s,w,w^\prime)$ at $s=1$. Therefore, an immediate corollary of the above theorem is the result of Choie and Kumar, given in \eqref{Kronecker of ztilde}.
\begin{corollary}\label{cor Rahul} 
The following identities hold:
		\begin{align*}
			&\tilde{P}_{-1}(w,w')=\frac{w-w^\prime}{2ww^\prime}, \\ \text{ and } \qquad
			& 
			\tilde{P}_0(w,w')=\mathfrak{F}_1(w)-\mathfrak{F}_1(w')-\frac{1}{2}\log{\frac{w}{w'}}+\frac{w-w^\prime}{2ww^\prime}\bigg[\gamma+\log{\frac{w-w'}{ww'}}\bigg].
		\end{align*}
\end{corollary}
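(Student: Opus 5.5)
The plan is to specialize Theorem \ref{Thm-1.2} to $k=-1$ and $k=0$ and evaluate each coefficient. Since $\tilde P_k=\sum_{i+j=k}a_i(b_j+c_j+d_j)$ with $a_i,b_j,c_j$ defined only for indices $\ge 0$ and $d_j$ for $j\ge -1$, we get
$$\tilde P_{-1}=a_0d_{-1},\qquad \tilde P_0=a_0(b_0+c_0+d_0)+a_1d_{-1}.$$

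First we compute the $a_i$. For $i=0$ the only term is $l=m=0$, giving $a_0=1$. For $i=1$ there are two contributions. The term $(l,m)=(1,0)$ gives $-\log(w-w')$. The term $(l,m)=(0,1)$ has $s_1=1$ and contributes $(s_1+1)(-\Gamma'(1))=2\gamma$. Hence $a_1=2\gamma-\log(w-w')$.

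Next we compute $d_{-1}$ and $d_0$.
\begin{itemize}
\item For $j=-1$ the constraint forces $l_1=-1$ and $l_2=l_3=0$, where $\tilde\gamma_{-1}=1$ is the residue of $\zeta$. This gives $d_{-1}=\Gamma(2)J_{0,0,0}(w,w')=\int_{w'}^{w}x^{-2}\,dx=\frac{w-w'}{ww'}$.
\item For $j=0$ there are three terms: $(l_1,l_2,l_3)=(0,0,0)$, $(-1,1,0)$ and $(-1,0,1)$.
  \begin{itemize}
  \item The first gives $\tilde\gamma_0\,\frac{w-w'}{ww'}=\gamma\,\frac{w-w'}{ww'}$.
  \item The second gives $2\Gamma'(2)\,\frac{w-w'}{ww'}=2(1-\gamma)\,\frac{w-w'}{ww'}$.
  \item The third gives $-2J_{1,0,0}+J_{0,1,0}+J_{0,0,1}$. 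These elementary integrals are evaluated by integration by parts, using $\int \log x\,x^{-2}\,dx=-(\log x+1)/x$. For the other two, $\int\log(u-x)x^{-2}\,dx$ and $\int\log(x-v)x^{-2}\,dx$ yield terms like $\frac{1}{u}\log\frac{x}{u-x}$ plus boundary terms.
  \end{itemize}
\end{itemize}

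Likewise $c_0=\zeta(0)\Gamma(1)I_{0,0,0}=-\frac12\log\frac{w}{w'}$. Also $b_0=b_{0,1}[\mathfrak F_1(w)-\mathfrak F_1(w')]$, because the $\tilde H$-sum is empty when $j=0$. This uses the normalization $b_{0,1}=1$ from \eqref{bkj}.

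The remaining step, and the main bookkeeping obstacle, is to add $a_1d_{-1}+d_0$ and check that the $\gamma$ terms and log terms collapse to $\frac{w-w'}{2ww'}\bigl[\gamma+\log\frac{w-w'}{ww'}\bigr]$. The result for $\tilde P_{-1}=\frac{w-w'}{ww'}$ suggests a normalization factor $\frac12$ is present, presumably from $\Gamma^{(l_2)}(2)$ or from the definition of $J$ with the factor $2^{r+l_2}$. I expect a compensating factor from the change of variables in Section \ref{Laurent series}, so I would recheck that normalization against \eqref{Kronecker of ztilde}.

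Concretely, the boundary terms of $J_{0,1,0}+J_{0,0,1}$ are singular at the endpoints, but the $\log(u-x)$ and $\log(x-v)$ singularities are integrable. After simplification they give $\frac{w-w'}{ww'}\log(w-w')$-type terms plus $\log w$ and $\log w'$ pieces. These combine with $-2J_{1,0,0}$ to produce $-\frac{w-w'}{ww'}\log(ww')$ up to the constant terms. The constants then cancel against $2(1-\gamma)$ and $2\gamma$ to leave exactly $\gamma$.
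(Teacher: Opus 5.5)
You follow the same route as the paper: specialize Theorem~\ref{Thm-1.2}, compute $a_0,a_1,b_0,c_0,d_{-1},d_0$, evaluate the three $J$-integrals by parts, and collect terms. Your values of $a_0,b_0,c_0$ are correct, and so is your enumeration of the three triples $(l_1,l_2,l_3)$ in $d_0$. Your $a_1=2\gamma-\log(w-w')$ is also correct, since $1/\Gamma^2(s)=1+2\gamma(s-1)+\cdots$. The paper's proof prints $-\log(w-w')-2\gamma$, which is a sign slip, and only your sign makes the $\gamma$-terms cancel.

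The genuine gap is the normalization you flagged but left open. Taken literally, your computation gives $\tilde P_{-1}=\frac{w-w'}{ww'}$. It also makes $d_0+a_1d_{-1}=\frac{w-w'}{ww'}\bigl[\gamma+\log\frac{w-w'}{ww'}\bigr]$, which is twice what the corollary asserts. So as it stands the argument does not prove the statement. Your suggested sources of a compensating $\tfrac12$ are not the right ones. The factors $\Gamma^{(l_2)}(2)$ and $2^{r+l_2}$ are correct as they are. The Jacobian of $u+v=t$, $w'u+wv=tx$ only produces the overall $(w-w')^{1-2s}$, which is already absorbed in $A(s)$.

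The missing $\tfrac12$ comes from the constant in the splitting $\frac{1}{1-e^{-t}}=\bigl(\frac{1}{e^t-1}-\frac1t+\frac12\bigr)+\frac1t+\frac12$ used in the proof of Proposition~\ref{Prop2.2}. It gives $I_3(s)=\tfrac12\Gamma(2s)\zeta(s)h(s)$, so every $d_j$ carries an extra factor $\tfrac12$. This is how $d_j$ appears in \eqref{I3coeff}; the displayed formula for $d_j$ in Theorem~\ref{Thm-1.2} drops it. Write $\Delta=\frac{w-w'}{ww'}$. With the factor included, $d_{-1}=\frac{\Delta}{2}$ and $d_0=\frac12\bigl[\gamma\Delta+2(1-\gamma)\Delta+J_{0,0,1}+J_{0,1,0}-2J_{1,0,0}\bigr]$, where
\begin{align*}
J_{1,0,0}&=\Delta-\frac{\log w}{w}+\frac{\log w'}{w'},\qquad J_{0,0,1}=\Delta\log(w-w')-\frac{\log w-\log w'}{w'},\\
J_{0,1,0}&=\Delta\log(w-w')-\frac{\log w-\log w'}{w}.
\end{align*}
Note the sign $+\Delta$ in $J_{1,0,0}$, which is what your antiderivative $-(\log x+1)/x$ gives. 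The paper prints $-\Delta$ there, although its final $d_0$ is right. These values give $J_{0,0,1}+J_{0,1,0}-2J_{1,0,0}=2\Delta\log(w-w')-\Delta\log(ww')-2\Delta$. Hence $d_0=\Delta\log(w-w')-\frac{\gamma}{2}\Delta-\frac{\Delta}{2}\log(ww')$, and therefore $d_0+a_1d_{-1}=\frac{\Delta}{2}\bigl[\gamma+\log\frac{w-w'}{ww'}\bigr]$. Together with $b_0$ and $c_0$, this is exactly the corollary.
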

Choie and Kumar in \cite{Choie} investigated the arithmetic properties and application of $\mathfrak{F}_1(x)$ and for that they mainly considered the function 
\begin{align*}
    \mathcal{F}_1(x):=\mathfrak{F}_1(x)-\frac{1}{2}\left(\gamma-\log\left(\frac{2\pi}{x}\right)\right) \quad \qquad (x \in \mathbb{C}\setminus(-\infty,0]).
\end{align*}
Ramanujan's identity \eqref{Ramanujan modular relation} leads to the two term functional equation
\begin{equation}\label{Two term F1}
\mathcal{F}_1(x)=\frac{1}{x}\mathcal{F}_1\left(\frac{1}{x}\right).
\end{equation} 
Moreover, the above function also satisfies period relations of Maass cusp forms in the sense of Lewis-zagier \cite{Lewis}, that is it satisfies three term functional equation
\begin{align}\label{Three term F1}
    \mathcal{F}_1(x)-\mathcal{F}_1(x+1)-\frac{1}{x+1}\mathcal{F}_1\left(\frac{x}{x+1}\right)=0,
\end{align}
which led Choie and Kumar to aptly coin, for the function $\mathcal{F}_1(x)$, the terminology {\it Ramanujan period function}. In the same article, the authors have shown that the function $\mathcal{F}_1(x)$ is a Hecke eigenform with respect to the Hecke operators acting on the space of periods (cf. \cite{Choiezag}, \cite{Radchenko}) and also provide important applications of the Ramanujan period function.

We next draw our attention in the first term ($r=0$-th term) of the finite sum in $\mathfrak{F}_k(x)$, which we denote by $\mathfrak{F}_k^0(x)$. For any positive integer $k$, the definition \eqref{New-Fk} of $\mathfrak{F}_k(x)$ yields
\begin{align}\label{Dixit's-Fk}
	\mathfrak{F}_k^0(x)= \sum_{n=1}^{\infty} \bigg[ \psi_{k-1}(nx)+\frac{\log^{k-1}(nx)}{2nx}-\frac{\log^{k}(nx)}{k}  \bigg] \quad \qquad \left(x \in \mathbb{C}\setminus(-\infty,0]\right).
\end{align}
Recently, Dixit et. al.	\cite[Theorem 2.1]{DSS24} established a beautiful symmetric relation involving $\mathfrak{F}_k^0(x)$, which precisely states that
\begin{align*}
	&\sqrt{x}\sum_{j=0}^k (-1)^{j+1}\binom{k}{j} \log^{k-j}(\sqrt{x}) \Bigg\{ \mathfrak{F}_{j+1}^0(x)
	+\sum_{l=0}^j \binom{j}{l} \frac{\gamma_\ell \log^{j-l}(x)}{2x} - \frac{\log^{j+1}(x)+2(-1)^{j+1}\zeta^{j+1}(0)}{2(j+1)x} \Bigg\}\nonumber\\
	& \qquad =\sqrt{\frac{1}{x}}\sum_{j=0}^k (-1)^{j+1}\binom{k}{j} \log^{k-j}\left(\sqrt{\frac{1}{x}}\right)\hspace{-.15cm} \Bigg\{ \mathfrak{F}_{j+1}^0\left(\frac{1}{x}\right)
	+\sum_{l=0}^j \binom{j}{l} \frac{\gamma_\ell \log^{j-l}(\frac{1}{x})}{2/x}\\
	&\hspace{10cm}- \frac{\log^{j+1}(\frac{1}{x})+2(-1)^{j+1}\zeta^{j+1}(0)}{2(j+1)/x} \Bigg\},
\end{align*}
where $\gamma_\ell$ is the generalized Euler constant, defined in \cite[p. 164, Equation (13.1)]{Berndt}. Clearly, the above identity reduces to Ramanujan's identity \eqref{Ramanujan modular relation} at $k=1$. Here our goal is to investigate the arithmetic properties and applications of $\mathfrak{F}_k^0(x)$ akin to Choie and Kumar and for that we consider the $k$-th order extension of the Ramanujan period function given by
\begin{align}\label{Generalized-Ramanujan-Period-Function}
         \mathcal{F}_k(x):= (-1)^{k-1}\mathfrak{F}_k^0(x)-\sum_{l=0}^{k}\frac{(-1)^l}{k}\binom{k}{l} \log^{l}(x)\zeta^{(k-l)}(0)-\frac{1}{2}(-1)^{k-1}\gamma_{k-1},
\end{align}
which is defined for $x \in \mathbb{C}\setminus (-\infty, 0]$.
Our next result on the function $\mathcal{F}_k(x)$ generalizes the two term \eqref{Two term F1} and the three term \eqref{Three term F1} functional equation satisfied by the Ramanujan period function $\mathcal{F}_1(x)$.
\begin{theorem}\label{Periodlike}
    For $x \in \mathbb{C}\setminus[0,\infty)$, the function $\mathcal{F}_k(x)$ satisfies
    \begin{align}\label{two term}
         \mathcal{F}_k(x)= \frac{1}{x} \sum_{j=1}^k \binom{k-1}{j-1} \log^{k-j}\(\frac{1}{x}\) \mathcal{F}_{j}\(\frac{1}{x}\),
    \end{align}
    \begin{align}\label{three term}
       \mathcal{F}_k(x)- \mathcal{F}_k(x+1) - \frac{1}{x+1} \sum_{j=1}^k \binom{k-1}{j-1} \log^{k-j}(x+1) \mathcal{F}_j\(\frac{x}{x+1}\)=0.
    \end{align}
\end{theorem}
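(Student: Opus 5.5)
The plan is to realise $\mathcal{F}_k(x)$ as a Mellin–Barnes type integral whose kernel behaves simply under $x\mapsto 1/x$ and $x\mapsto x+1$. I would start from the integral representation of $\psi_{k-1}$ coming from $\partial_s^{k-1}$ of the Hurwitz zeta function, namely $\psi_{k-1}(y)=-\gamma_{k-1}(y)$, where $\gamma_{k-1}(y)$ is the $(k-1)$-th Laurent coefficient of $\zeta(s,y)$ at $s=1$. Subtracting the first two asymptotic terms corresponds to removing the poles of $\zeta(s,y)-\frac{y^{1-s}}{s-1}-\frac{y^{-s}}{2}$. Summing over $n$ then gives
\begin{align*}
\mathfrak{F}_k^0(x)=(-1)^{k}\,(k-1)!\,\mathrm{Res}\text{-type coefficient in } (s-1)^{k-1} \text{ of } \sum_{n\ge1}\Big(\zeta(s,nx)-\frac{(nx)^{1-s}}{s-1}-\frac{(nx)^{-s}}{2}\Big).
\end{align*}
For $-1<\Re s<0$ this inner sum equals
\begin{align*}
\frac{1}{2\pi i}\int_{(c)}\Gamma(z)\zeta(z)\zeta(s+z)\,\frac{\Gamma(s+z)}{\Gamma(s)}\,x^{-s-z}\,dz .
\end{align*}
The cleaner route is to define the generating function $G(s,x):=\sum_{n}\big(\zeta(s,nx)-\frac{(nx)^{1-s}}{s-1}-\frac{(nx)^{-s}}{2}\big)$, continued in $s$ near $1$. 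I would then check that $\mathcal{F}_k(x)$ is, up to the explicit correction terms in \eqref{Generalized-Ramanujan-Period-Function}, $(-1)^{k-1}$ times the $(k-1)$-th Taylor coefficient at $s=1$ of $G(s,x)$ plus the contributions of the poles. In that normalisation the polynomial in $\log x$ with $\zeta^{(k-l)}(0)$, together with the $\gamma_{k-1}$ term, should be exactly what is needed to form a function $\Psi(s,x)$ satisfying
\begin{align*}
\Psi(s,x)=x^{-s}\,\Psi\!\left(s,\tfrac1x\right), \qquad \Psi(s,x)-\Psi(s,x+1)-(x+1)^{-s}\,\Psi\!\left(s,\tfrac{x}{x+1}\right)=0 .
\end{align*}
Here $\Psi(1,x)=\mathcal{F}_1(x)$. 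This is the Lewis–Zagier period-function structure at general weight $s$, generalising Choie–Kumar's \eqref{Two term F1} and \eqref{Three term F1}.

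The key steps, in order, are as follows.

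First, I would prove the two functional equations for $\Psi(s,x)$ for $s$ in a neighbourhood of $1$. For the two-term relation, the Mellin transform of $\Psi(s,\cdot)$ is of the form $\Gamma(z)\Gamma(s-z)\zeta(z)\zeta(s-z)$ times elementary factors, which is symmetric under $z\mapsto s-z$. This symmetry gives $x\mapsto 1/x$ with factor $x^{-s}$. For the three-term relation, I would use the series representation $\Psi(s,x)=\sum_{m,n\ge1}(\text{kernel in } mx+n)$ with a regularisation. I would then split the lattice sum over $(m,n)$ into the regions $n<m$, $n=m$ and $n>m$, exactly as in the classical proof for $\mathcal{F}_1$. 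Performing this split for general $s$ avoids handling the logarithms by hand.

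Second, I would expand in $s$. Set $s=1+\epsilon$ and write $x^{-s}=x^{-1}e^{-\epsilon\log x}$. Taking the coefficient of $\epsilon^{k-1}/(k-1)!$ on both sides, Leibniz's rule produces $\sum_{j}\binom{k-1}{j-1}\log^{k-j}(1/x)\mathcal{F}_j(1/x)$, which is \eqref{two term}, and likewise $\log^{k-j}(x+1)$ in \eqref{three term} from $(x+1)^{-1-\epsilon}$. I expect the signs $(-1)^{k-1}$ to be exactly those making $\mathcal{F}_k=\frac{(-1)^{k-1}}{(k-1)!}\partial_s^{k-1}$-coefficient compatible with $\log(1/x)=-\log x$.

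The main obstacle is the bookkeeping in the first step: verifying that the specific constants in \eqref{Generalized-Ramanujan-Period-Function} are precisely the $\epsilon$-Taylor coefficients of the correction terms. These correction terms are the residues at $z=0$, where $\zeta(0)$ and its derivatives appear, and at $z=s-1$, where the Stieltjes constants appear; they must be absorbed into $\Psi(s,x)$. I would compute these residues for general $s$ first, giving something like $\zeta(s)/2$ and $x^{1-s}$-type terms. Then I would check that they assemble into the stated $\sum_l\binom{k}{l}\log^l(x)\zeta^{(k-l)}(0)/k$ and $\gamma_{k-1}/2$. Consistency can be confirmed against Dixit et al.'s symmetric relation in the case $k=1$. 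Finally, the relations are extended from $x>0$ to $\mathbb{C}\setminus(-\infty,0]$ by analytic continuation, since both sides are analytic there.
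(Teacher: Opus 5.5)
Your strategy is the paper's. Up to a fixed sign, $\mathcal{F}_k$ is the $(k-1)$-st $s$-derivative at $s=1$ of the Lewis--Zagier function $\Psi_s(x)=\psi^+_{s/2}(x)=\sum^{*}_{m,n\ge 0}(mx+n)^{-s}$. One then differentiates $\Psi_s(x)=x^{-s}\Psi_s(1/x)$ and $\Psi_s(x)-\Psi_s(x+1)-(x+1)^{-s}\Psi_s\bigl(\frac{x}{x+1}\bigr)=0$ with Leibniz' rule. Your $\Psi$ is this function up to an overall sign: for $\Re s>2$ one has $\Psi_s(x)=G(s,x)+\frac12\zeta(s)+\frac{x^{1-s}\zeta(s-1)}{s-1}$, and the poles at $s=1$ cancel because $\zeta(0)=-\frac12$.

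The paper makes the identification through the Choie--Kumar integral representation of $\Psi_s$ and the Ishibashi-polynomial integral for $\mathfrak{F}_k^0$ (Lemmas \ref{Lemma:3.2}, \ref{modiFkx}, \ref{Ik(s)-derivative-of-psi(x)}). Your route via the Laurent expansion of $\zeta(s,y)$ at $s=1$ and $\psi_m=-\gamma_m$ works equally well and is more direct. You can also drop the Mellin analysis and the regularisation near $s=1$. Both relations hold for $\Re s>2$ by absolute convergence (swap $m,n$; split into $n>m$, $n=m$, $n<m$) and reach $s=1$ by analytic continuation in $s$. Your opening Mellin--Barnes display is garbled anyway: $\Gamma(s+z)\zeta(s+z)x^{-s-z}$ should be paired with $\Gamma(-z)\zeta(-z)$, not $\Gamma(z)\zeta(z)$.

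What you deferred with \emph{I expect} is exactly what fixes the shape of the identities, and your guesses there are wrong. Termwise,
$\zeta(s,y)-\frac{y^{1-s}}{s-1}-\frac{y^{-s}}{2}=\sum_{m\ge 0}\frac{(-1)^{m+1}(s-1)^m}{m!}\bigl[\psi_m(y)+\frac{\log^m y}{2y}-\frac{\log^{m+1}y}{m+1}\bigr]$.
Together with the two correction terms this gives $\mathcal{F}_k(x)=-\partial_s^{k-1}\Psi_s(x)\big|_{s=1}$: a $k$-independent sign and no $1/(k-1)!$. A factor $1/(k-1)!$ would replace $\binom{k-1}{j-1}$ by $1/(k-j)!$. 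A factor $(-1)^{k-1}$ would replace $\log\frac1x$ by $\log x$ in \eqref{two term}.

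With the correct normalisation, $\partial_s^{k-j}(x+1)^{-s}=(x+1)^{-s}\log^{k-j}\frac{1}{x+1}$. So your argument proves the three-term relation with $\log^{k-j}\frac{1}{x+1}$, not with $\log^{k-j}(x+1)$ as you claim. The paper's argument gives the same: its displayed relation for $\mathcal{I}_k(x+1,s)$ carries $\log^{k-j}\bigl(\frac{1}{x+1}\bigr)$. For $k\ge 2$ the two versions genuinely differ; at $k=2$ by $\frac{2\log(x+1)}{x+1}\mathcal{F}_1\bigl(\frac{x}{x+1}\bigr)$, which is not identically zero. Hence \eqref{three term} as printed is false for $k\ge 2$ and is a misprint, and your sentence deriving \emph{likewise} $\log^{k-j}(x+1)$ must be corrected. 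Similarly, the domain should read $\mathbb{C}\setminus(-\infty,0]$, which is what you use.
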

We next study an action of the Hecke operators on the function $\mathcal{F}_k(x)$ and for that we follow the notations of Radchenko and Zagier \cite{Radchenko}, to define the Hecke operators, which acts on the space of periods. 

Let $\Gamma$ be the full modular group, which is generated by the matrices $S=\begin{pmatrix}
	0 & -1\\
	1 & 0
\end{pmatrix}$ and $T=\begin{pmatrix}
1 & 1\\
0 & 1
\end{pmatrix}$. For any positive integer $n$, let $\mathcal{M}_n$ be the set of $2\times 2$ integer matrices of determinant $n$, modulo $\{\pm 1\}$ and $\mathcal{R}_n=\mathbb{Q}[\mathcal{M}_n]$. For every positive integer $n$, we call an element $\tilde{T}_n\in \mathcal{R}_n $ ``acts like the $n$-th Hecke operator on periods", if it satisfies the relation (cf. \cite{Choiezag}, \cite{Radchenko})
\begin{align*}
	(1-S)\tilde{T}_n=T_n^\infty (1-S)+(1-T)Y
\end{align*}
for some $Y \in \mathcal{R}_n$, where 
\begin{align*}
	T_n^\infty = \sum_{ad=n}\sum_{0\le b<d} \begin{bmatrix}
		a & b\\
		0 & d
	\end{bmatrix} \in \mathcal{R}_n,
\end{align*}
represents the usual $n$-th Hecke operator, which acts on the space of modular forms. 

For simplicity, we denote the $j$-th derivative of a general divisor function $\sigma_{1-s}(n) = \sum_{d|n}d^{1-s}$ with respect to $s$, evaluated at $s=1$ by $\sigma_{0}^{(j)}(n)$. For any non-negative integer $l$ and $\gamma\in \mathcal{M}_n$, we define the general slash operator of weight $k$ by
\begin{align}\label{genslash}
	(f|^{l}_{k}\gamma)(x):= \frac{(ad-bc)^{\frac{k}{2}}}{(cx+d)^k} \log^{l}\left(\frac{1}{cx+d}\right) f\left(\frac{ax+b}{cx+d}\right).
\end{align}
It reduces to the standard slash operator at $l=0$. We are now ready to state our result, where we exhibit the action of $\tilde{T}_n$ on the higher order extended Ramnujan period function, defined in \eqref{Generalized-Ramanujan-Period-Function}. 
\begin{theorem}\label{Heckeopaction}
    Let $\tilde{T_n}=\sum_{\gamma}v_{\gamma}\gamma$ represent the $n$-th Hecke operator on periods and that all matrices in $\tilde{T_n}$ have non-negative entries. For $x>0$, the higher order extended Ramanujan period function satisfies
    \begin{align*}
        \sum_{j=1}^k \binom{k-1}{j-1} \bigg(\mathcal{F}_j|_1^{(k-j)}\tilde{T_n}\bigg)(x) = \sqrt{n} \sum_{j=1}^{k} \binom{k-1}{j-1}\sigma_{0}^{(k-j)}(n) \mathcal{F}_j(x).
    \end{align*}
\end{theorem}
We next consider an example \cite[Proposition 3]{Radchenko} 
\begin{align*}
    \hat{T}_n=\sum_{\substack{0\le c <a \\ 0\le b <d \\ ad-bc=n}}\begin{bmatrix}
        a &b\\
c & d    \end{bmatrix},
\end{align*}
which acts like the $n$-th Hecke operator on periods. In particular, for $n=2$ and $3$, we have
\begin{align}\label{Example}
	\hat{T}_2= \begin{bmatrix}
		1 & 0\\
		0 & 2
	\end{bmatrix}+ \begin{bmatrix}
	1 & 1\\
	0 & 2
	\end{bmatrix}+ \begin{bmatrix}
	2 & 0\\
	0 & 1
	\end{bmatrix}+ \begin{bmatrix}
	2 & 0\\
	1 & 1
	\end{bmatrix}, \, \, \, 
	\hat{T}_3=\sum_{p=0}^{2}\begin{bmatrix}
		1 & p\\
		0 & 3
	\end{bmatrix} +\sum_{p=0}^{2}\begin{bmatrix}
	3 & 0\\
	p & 1
	\end{bmatrix} +\begin{bmatrix}
	2 & 1\\
	1 & 2
	\end{bmatrix}.
\end{align}
The following corollary shows that one can construct multi-term functional equation for the higher order extended Ramanujan period function.
\begin{corollary}
For $x>0$, higher order extended Ramanujan period function satisfies
 \begin{align*}
 \mathcal{F}_k(2x)&= \sum_{j=1}^{k}\binom{k-1}{j-1}\bigg[\sigma_0^{(k-j)}(2)\mathcal{F}_j(x)-\frac{1}{2}\log^{k-j}\left(\frac{1}{2}\right)\mathcal{F}_j\left(\frac{x}{2}\right)-\frac{1}{2}\log^{k-j}\left(\frac{1}{2}\right)\mathcal{F}_j\left(\frac{x+1}{2}\right)\\
 &\hspace{8.1cm} -\frac{1}{x+1}\log^{k-j}\left(\frac{1}{x+1}\right)\mathcal{F}_j\left(\frac{2x}{x+1}\right) \bigg],\\
\mathcal{F}_k(3x)&=\sum_{j=1}^{k}\binom{k-1}{j-1}\bigg[\sigma_0^{(k-j)}(3)\mathcal{F}_j(x)-\frac{1}{3}\log^{k-j}\left(\frac{1}{3}\right)\sum_{p=0}^{2}\mathcal{F}_j\left(\frac{x+p}{3}\right)\\
&\hspace{0.2cm}-\frac{1}{x+2}\log^{k-j}\left(\frac{1}{x+2}\right)\mathcal{F}_j\left(\frac{2x+1}{x+2}\right)-\sum_{p=1}^{2}\frac{1}{px+1}\log^{k-j}\left(\frac{1}{px+1}\right)\mathcal{F}_j\left(\frac{3x}{px+1}\right)\bigg].
    \end{align*}
\end{corollary}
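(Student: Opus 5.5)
The plan is to specialize Theorem \ref{Heckeopaction} to $n=2$ and $n=3$, using the operators $\hat{T}_2$ and $\hat{T}_3$ of \eqref{Example}, and then isolate the single term that produces $\mathcal{F}_k(2x)$ (respectively $\mathcal{F}_k(3x)$).

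\emph{Hypotheses.} By \cite[Proposition 3]{Radchenko}, $\hat{T}_n$ acts like the $n$-th Hecke operator on periods. Every matrix listed in \eqref{Example} has non-negative entries. Hence Theorem \ref{Heckeopaction} applies for every $x>0$, giving
\begin{align*}
\sum_{j=1}^k \binom{k-1}{j-1}\big(\mathcal{F}_j|_1^{(k-j)}\hat{T}_n\big)(x)=\sqrt{n}\sum_{j=1}^k\binom{k-1}{j-1}\sigma_0^{(k-j)}(n)\mathcal{F}_j(x).
\end{align*}

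\emph{Expanding the slash operator.} Each matrix is expanded via \eqref{genslash} with weight $1$ and $l=k-j$. For $\gamma=\begin{bmatrix} a&b\\ c&d\end{bmatrix}$ of determinant $n$ this gives
\begin{align*}
\big(\mathcal{F}_j|_1^{(k-j)}\gamma\big)(x)=\frac{\sqrt{n}}{cx+d}\log^{k-j}\Big(\frac{1}{cx+d}\Big)\mathcal{F}_j\Big(\frac{ax+b}{cx+d}\Big).
\end{align*}
Every term therefore carries the common factor $\sqrt{n}$, which cancels against the $\sqrt{n}$ on the right-hand side.

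\emph{The distinguished term.} For $\gamma=\begin{bmatrix} n&0\\0&1\end{bmatrix}$ we have $cx+d=1$, so $\log^{k-j}(1)$ vanishes unless $j=k$, with the convention $\log^0(1)=1$. The $j$-sum then collapses to $\binom{k-1}{k-1}\sqrt{n}\,\mathcal{F}_k(nx)$. Moving all remaining matrix contributions to the right-hand side yields the stated identities.

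For $n=2$, the remaining contributions are:
\begin{itemize}
\item $\begin{bmatrix}1&0\\0&2\end{bmatrix}$ and $\begin{bmatrix}1&1\\0&2\end{bmatrix}$, which give $\frac12\log^{k-j}(\frac12)\mathcal{F}_j(\frac{x}{2})$ and $\frac12\log^{k-j}(\frac12)\mathcal{F}_j(\frac{x+1}{2})$;
\item $\begin{bmatrix}2&0\\1&1\end{bmatrix}$, which gives $\frac{1}{x+1}\log^{k-j}(\frac{1}{x+1})\mathcal{F}_j(\frac{2x}{x+1})$.
\end{itemize}

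For $n=3$, the remaining contributions are:
\begin{itemize}
\item $\begin{bmatrix}1&p\\0&3\end{bmatrix}$ for $p=0,1,2$, which give $\frac13\log^{k-j}(\frac13)\mathcal{F}_j(\frac{x+p}{3})$;
\item $\begin{bmatrix}3&0\\p&1\end{bmatrix}$ for $p=1,2$, which give $\frac{1}{px+1}\log^{k-j}(\frac{1}{px+1})\mathcal{F}_j(\frac{3x}{px+1})$;
\item $\begin{bmatrix}2&1\\1&2\end{bmatrix}$, which gives $\frac{1}{x+2}\log^{k-j}(\frac{1}{x+2})\mathcal{F}_j(\frac{2x+1}{x+2})$.
\end{itemize}

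There is no genuine obstacle here; the only care needed is bookkeeping. One must apply the $0^0=1$ convention for the distinguished matrix so that only the $j=k$ term survives there. One must also track the binomial weights $\binom{k-1}{j-1}$ so they sit inside the final single $j$-sum with the correct signs.
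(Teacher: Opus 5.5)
Your proposal is correct and follows the paper's own argument: the paper obtains the corollary by substituting $\hat{T}_2$ and $\hat{T}_3$ from \eqref{Example} into Theorem \ref{Heckeopaction}. Your added bookkeeping, which the paper leaves implicit, also checks out: you expand each weight-one slash term, cancel the common factor $\sqrt{n}$, and note that the matrix with $c=0$, $d=1$ contributes only its $j=k$ term $\mathcal{F}_k(nx)$, since $\log^{k-j}(1)=0$ for $j<k$.
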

The above result readily follows from an application of Theorem \ref{Heckeopaction}, where we need to substitute $\tilde{T_n}$ by $\hat{T}_2$ and $\hat{T}_3$ respectively, as given in \eqref{Example}, to act on the higher order extended Ramanujan period function.  

Over the years, the integral \cite[p. 145, 271]{Cohen}
\begin{align*}
J(x)= \int_{0}^{\infty} \frac{\log\!\left(1 + e^{-xt}\right)}{1 + e^{t}}~ dt \quad \qquad (\Re(x) > 0),
\end{align*}
has been the subject of sustained research interest. Regarding the above integral, Chowla \cite[p. 372]{Chowla} remarked, `A direct evaluation of this definite integral is probably difficult'. Recently, Radchenko and Zagier \cite{Radchenko} established a beautiful connection between $J(x)$ and the Herglotz function $F(x)$, namely, for $x>0$
\begin{align*}
	J(x)=F(2x)-2F(x)+F(\frac{x}{2})+\frac{\pi^2}{12x},
\end{align*}
which leads them to evaluate special values of $J(x)$, for example,
\begin{align*}
J(4+\sqrt{17}) &= -\frac{\pi^2}{6}+\frac{1}{2}\log^2(2)+ \frac{1}{2}\log(2)\log(4+\sqrt{17}),\\
J\left(\frac{2}{5}\right) &= \frac{11\pi^2}{240}+\frac{3}{4}\log^2(2)-2\log^2\left(\frac{\sqrt{5}+1}{2} \right).
\end{align*}
Earlier works of Herglotz \cite{Herglotz} as well as Muzaffar and Williams \cite{Muzaffar} on evaluating the special values of $J(x)$, in the particular form $J(n+\sqrt{n^2-1})$, are based on the tools of algebraic number theory, whereas, Radchenko and Zagier \cite{Radchenko} employed the methods from analytic number theory to evaluate the special values of $J(x)$, which makes this integrals more elegant. Recently, Dixit et. el. \cite{Dixit} studied more general integral involving Gauss sum.

Choie and Kumar \cite{Choie} introduced an integral, which is an analogue of $J(x)$, namely
\begin{align*}
	J_1(x):= \int_{0}^{\infty} \frac{dt}{(1+e^{-t})(1+e^{xt})} \quad \qquad (\Re(x)>0),
\end{align*}
and established a nice relation with the function $\mathfrak{F}_1(x)$, defined in \eqref{Ramanujan-F1-Function}. The relation precisely states as \cite[Theorem 4.5]{Choie}
\begin{align}\label{CR-J-relation-F}
	J_1(x)=3\mathfrak{F}_1(x)-2\mathfrak{F}_1(2x)-\mathfrak{F}_1(\frac{x}{2})+\frac{1}{2x}\log{(2)}.
\end{align}
Here, for any posive integer $k$, we consider the k-th order generalization of the above integral namely,
\begin{align}\label{J_k-Function}
    J_k(x) = \int_{0}^{\infty} \frac{S_k(t)}{(1+e^{-t})(1+e^{xt})} dt \quad \qquad  (\Re(x)>0),
\end{align}
where $S_k(t)$ is the $k$-th Ishibashi polynomial, defined in \eqref{Ipoly}. In the  following theorem, we generalize the identity \eqref{CR-J-relation-F} by connecting the above integral with the function $\mathfrak{F}_k^0(x)$, defined in \eqref{Dixit's-Fk}. 
   \begin{theorem}\label{JrelF}
    For $\Re(x)>0$, we have
    \begin{align*}
J_k(x)=\mathfrak{F}_k^0(x)&-2\mathfrak{F}_k^0(2x)-\sum_{j=1}^k \binom{k-1}{j-1}\log^{k-j}\left(\frac{1}{2}\right)\bigg[\mathfrak{F}_j^0\left(\frac{x}{2}\right)-2\mathfrak{F}_j^0(x)\bigg]\nonumber\\
    &+\frac{(-1)^k}{2kx}\bigg[\log^k x-\log^k2x\bigg]-\frac{(-1)^{k}}{2x}\sum_{l=0}^{k-1}\binom{k-1}{l}\gamma_{k-1-l}\bigg[\log^lx-\log^l2x\bigg].
\end{align*}
\end{theorem}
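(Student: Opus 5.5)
The plan is to reduce $J_k(x)$ to integrals of Binet type, each of which can be matched with a series $\mathfrak{F}_j^0$. The two kernels are first rewritten using the elementary identities
\begin{align*}
\frac{1}{1+e^{xt}}=\frac{1}{e^{xt}-1}-\frac{2}{e^{2xt}-1},\qquad \frac{1}{1+e^{-t}}=1-\frac{1}{e^{t}+1}=1-\frac{1}{e^t-1}+\frac{2}{e^{2t}-1}.
\end{align*}
I will assume that the Ishibashi polynomial $S_k(t)$ of \eqref{Ipoly} arises as the $(k-1)$-st $s$-derivative at $s=1$ of $t^{s-1}/\Gamma(s)$, up to a normalisation. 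Then the Mellin transforms satisfy $\int_0^\infty S_k(t)e^{-zt}\,dt=\partial_s^{k-1}\big[z^{-s}\big]_{s=1}$ up to sign, which is a multiple of $\log^{k-1}(z)/z$. This lets me work with the one-parameter integral
\begin{align*}
J(s,x)=\frac{1}{\Gamma(s)}\int_0^\infty \frac{t^{s-1}\,dt}{(1+e^{-t})(1+e^{xt})}
\end{align*}
and recover $J_k(x)$ as $\partial_s^{k-1}J(s,x)|_{s=1}$, again up to the normalising sign. Working with $J(s,x)$ sidesteps the fact that the individual pieces coming from the splitting diverge at $t=0$.

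The first step is to take $\Re(s)$ large, substitute the splittings above, and expand each of $1/(e^{at}-1)$, $a\in\{1,2,x,2x\}$, as a geometric series. Every resulting term is then a Dirichlet-type double series of the shape $\sum_{m,n}(am+bn)^{-s}$. The same double series appear on the other side once $\mathfrak{F}_k^0$ is written in integral form. For that I will use the generalized Binet representation from \cite{DSS24}:
\begin{align*}
\psi_{k-1}(z)+\frac{\log^{k-1}z}{2z}-\frac{\log^k z}{k}=\pm\int_0^\infty S_k(t)\Big(\frac{1}{e^t-1}-\frac1t+\frac12\Big)e^{-zt}\,dt .
\end{align*}
Summing this over $z=nx$ gives
\begin{align*}
\mathfrak{F}_k^0(x)=\pm\int_0^\infty S_k(t)\Big(\frac{1}{e^t-1}-\frac1t+\frac12\Big)\frac{dt}{e^{xt}-1}.
\end{align*}
Rescaling $t\mapsto t/2$ in the terms involving $2x$ or $2$ produces $\mathfrak{F}^0(x/2)$ and $\mathfrak{F}^0(2x)$. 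The rescaling also shifts $S_k(t/2)$: its $s$-version is $2^{1-s}t^{s-1}$, and differentiating $k-1$ times gives $\sum_j\binom{k-1}{j-1}\log^{k-j}(1/2)\,S_j(t)$. This is exactly how the weighted sum $\sum_j\binom{k-1}{j-1}\log^{k-j}(\tfrac12)[\cdots]$ in the statement arises.

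The second step is the bookkeeping of what is left over. These are the pieces of the regularisation involving $-\tfrac1t+\tfrac12$, together with the constant term $1$ from $1/(1+e^{-t})$. At the $s$-level each of them is an explicit multiple of $\Gamma(s)\zeta(s)x^{-s}$, $\zeta(s-1)x^{1-s}$, or a similar expression with $x$ replaced by $2x$. I will combine these before differentiating in $s$, so that the poles at $s=1$ cancel between the $x$ and $2x$ terms. Differentiating $k-1$ times at $s=1$, the Laurent coefficients of $\zeta$ produce the constants $\gamma_{k-1-l}$, and the powers $x^{-s}$ produce the terms $\log^k x-\log^k 2x$ and $\log^l x-\log^l 2x$ with the prefactor $\frac{(-1)^k}{2x}$. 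Finally, analytic continuation in $s$ down to $s=1$ justifies the identity at the level of $J_k$.

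I expect this second step to be the main obstacle. Matching signs, the powers of $2$, and the exact normalisation of $S_k$ so that the residual terms come out exactly as $\frac{(-1)^k}{2kx}[\log^kx-\log^k2x]$ and the $\gamma$-sum is delicate. In particular, it has to be checked that the coefficients of $\mathfrak{F}_k^0(x)$ combine to the stated $+\mathfrak{F}_k^0(x)$ plus the $+2\mathfrak{F}_j^0(x)$ inside the bracket. As a consistency check, I will verify that at $k=1$ the formula reproduces \eqref{CR-J-relation-F}.
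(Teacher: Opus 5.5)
Your method works and reaches the same identity as the paper. The ingredients are the same: the identities $\frac{1}{1+e^{xt}}=\frac{1}{e^{xt}-1}-\frac{2}{e^{2xt}-1}$ and $\frac{1}{1+e^{-t}}=1-\frac{1}{e^{t}-1}+\frac{2}{e^{2t}-1}$, Lemma \ref{Lemma:3.2}, the substitution $t\mapsto t/2$ with Lemma \ref{sktrelation}, and the Laurent expansion of $\zeta(s)$. The difference lies in how the divergence at $t=0$ is handled.

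\textbf{The paper's route.} It regularises inside the kernel. With $\beta(t):=\frac{1}{e^t-1}-\frac1t+\frac12$ it uses $\frac{1}{1+e^{-t}}=-\beta(t)+2\beta(2t)+\frac12$, where the $1/t$ singularities already cancel. Hence $I_k^1,I_k^2,I_k^3$ converge separately with $S_k(t)$ in place, and no continuation is needed. The variable $s$ appears only in evaluating $I_k^3=\frac12\partial_s^{k-1}\bigl[x^{-s}(1-2^{1-s})\zeta(s)\bigr]_{s=1}$.

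\textbf{Your route.} You split the unregularised kernel for $\Re(s)>2$ and then have to reinsert $\beta$ through
\begin{align*}
\sum_{m,n\ge 1}(m+ny)^{-s}=G(s,y)-\frac{1}{2}y^{-s}\zeta(s)+\frac{y^{1-s}\zeta(s-1)}{s-1},\qquad G(s,y):=\frac{1}{\Gamma(s)}\int_0^\infty \frac{t^{s-1}\beta(t)}{e^{yt}-1}\,dt .
\end{align*}
Be careful about which terms get rescaled. The argument $y=x/2$ comes from $(e^{2t}-1)^{-1}(e^{xt}-1)^{-1}$ after $t\mapsto t/2$, whereas $(e^{t}-1)^{-1}(e^{2xt}-1)^{-1}$ needs no rescaling. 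The $\zeta(s-1)/(s-1)$ contributions then cancel identically, and the $\zeta(s)$ contributions leave exactly $\frac12x^{-s}(1-2^{1-s})\zeta(s)$. You therefore obtain, for $\Re(s)>0$,
\begin{align*}
J(s,x)=-G(s,x)+2G(s,2x)+2^{1-s}G\bigl(s,\tfrac{x}{2}\bigr)-2^{2-s}G(s,x)+\tfrac12 x^{-s}(1-2^{1-s})\zeta(s).
\end{align*}
This is precisely the paper's decomposition $I_k^1+I_k^2+I_k^3$ lifted to general $s$.

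\textbf{Comparison.} Your version gives a single identity in $s$ from which all $J_k$ follow by differentiation, in the spirit of the $\mathcal{I}_k(x,s)$ argument used for Theorem \ref{Periodlike}. The paper's version avoids the pole bookkeeping altogether. Your geometric-series/double-series step is not actually needed.

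\textbf{A warning about the signs you left open.} Lemma \ref{Dixit's-Lemma}(2) gives $S_k$ exactly, with no normalising sign. Lemma \ref{Lemma:3.2} gives $\partial_s^{j-1}G(s,y)\big|_{s=1}=(-1)^j\mathfrak{F}_j^0(y)$. Hence your computation will produce
\begin{align*}
(-1)^k\bigl[2\mathfrak{F}_k^0(2x)-\mathfrak{F}_k^0(x)\bigr]+\sum_{j=1}^k(-1)^j\binom{k-1}{j-1}\log^{k-j}\bigl(\tfrac12\bigr)\bigl[\mathfrak{F}_j^0\bigl(\tfrac x2\bigr)-2\mathfrak{F}_j^0(x)\bigr]
\end{align*}
plus the stated $\zeta$-terms. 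This is exactly what the paper itself obtains in \eqref{Ik1-relation}--\eqref{Ik3-relation}.

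It agrees with the displayed theorem for $k=1$, so your check against \eqref{CR-J-relation-F} will pass. For $k\ge 2$, however, the display is correct only after replacing each $\mathfrak{F}_j^0$ by $(-1)^{j-1}\mathfrak{F}_j^0$. For example, at $k=2$ all the $\mathfrak{F}_2^0$-terms come with the opposite sign, while the $\mathfrak{F}_1^0$-terms agree. So the ``matching of the coefficients of $\mathfrak{F}_k^0(x)$'' that you single out as the main obstacle cannot be achieved as printed. Do not adjust signs to force it.
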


The paper is organized as follows. In Section \ref{Prelim}, we recall some useful notations an results. Section \ref{Laurent series} is concerned about the Laurent series expansion of the function $\tilde{\mathcal{Z}}(s,w,w^\prime)$ at $s=1$. In Section \ref{Arithmetic properties}, we investigate the arithmetic properties of $\mathfrak{F}_k^0(x)$, which naturally arises in the Laurent coefficient of $\tilde{\mathcal{Z}}(s,w,w^\prime)$. Finally, in Section \ref{Application}, we relate $\mathfrak{F}_k^0(x)$ with the integral $J_k(x)$.

\section{Preliminaries}\label{Prelim}
In this section, we collect known facts from the literature that will be used in the subsequent proofs. In order to evaluate the Laurent coefficients of $\mathcal{Z}(s,w,w^\prime)$ at $s=1$, Ishibashi  \cite[Theorem 1]{Ish03} studied an auxilliary polynomial in logarithm given by
\begin{align}\label{Ipoly}
 		S_k(t)=\sum_{j=0}^{k-1}a_{k,j}\log^j(t),
 	\end{align}
which can be termed as $k$-th Ishibashi polynomial, where $a_{k,j}$ are defined recursively by
 	\begin{align*}
 		a_{k,j}=-\sum_{r=0}^{k-2}\binom{k-1}{r}\Gamma^{k-r-1}(1)~a_{r+1,j} \quad \qquad (0\le j\le k-1),
 	\end{align*}
starting from $a_{1,0}=1$, and $a_{k,k-1}=1$. The following result \cite[Lemma 3.1]{Dixit} regarding the $k$-th Ishibashi polynomial \eqref{Ipoly}, is crucial throghout. 
\begin{lemma}\label{Dixit's-Lemma}
	\begin{enumerate}
		\item For $k$,$j \in \mathbb{N}  $ such that $k-j\ge1$, we have
		\begin{align*}
			a_{k,j}=\binom{k-1}{j} \lim_{s\to 1}\frac{d^{k-j-1}}{ds^{k-j-1}}\bigg[\frac{1}{\Gamma(s)}\bigg].
		\end{align*}
	
		\item 	For any positive integer $k$, we have 
		\begin{align*}
			S_k(t)= \lim_{s\to1} \frac{d^{k-1}}{ds^{k-1}}\bigg[ \frac{t^{s-1}}{\Gamma(s)}\bigg].
		\end{align*}
	\end{enumerate}

\end{lemma}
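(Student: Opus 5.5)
We prove both parts through the generating function $G(s,t)=t^{s-1}/\Gamma(s)$, expanded in a Taylor series about $s=1$. The whole content of the lemma is that the recursion defining $a_{k,j}$ is exactly the relation forced by $\Gamma(s)\cdot\frac{1}{\Gamma(s)}=1$.

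\emph{Step 1: the recursion in closed form.} Set
\[
c_m:=\lim_{s\to1}\frac{d^m}{ds^m}\frac{1}{\Gamma(s)} .
\]
Differentiate the identity $\Gamma(s)\cdot\frac{1}{\Gamma(s)}=1$ exactly $n\ge1$ times by Leibniz's rule and let $s\to1$. Since $\Gamma(1)=1$, this gives
\[
c_n=-\sum_{m=0}^{n-1}\binom{n}{m}\Gamma^{(n-m)}(1)\,c_m,\qquad c_0=1 .
\]
We interpret $\Gamma^{k-r-1}(1)$ in the definition of $a_{k,j}$ as the derivative $\Gamma^{(k-r-1)}(1)$, which is the only reading under which $a_{2,0}=-\Gamma'(1)=\gamma$ is consistent with the boundary values.

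\emph{Step 2: proof of part (1).} We show $a_{k,j}=\binom{k-1}{j}c_{k-1-j}$ by strong induction on $k$, with $j$ fixed.
\begin{itemize}
\item The boundary cases hold: $a_{k,k-1}=1=\binom{k-1}{k-1}c_0$, and $a_{1,0}=1$.
\item For $k-j\ge2$, substitute the inductive hypothesis into the recursion. Terms with $r+1\le j$ vanish, since then $a_{r+1,j}=0$ (the polynomial $S_{r+1}$ has degree $r<j$). This leaves
\[
a_{k,j}=-\sum_{r=j}^{k-2}\binom{k-1}{r}\binom{r}{j}\Gamma^{(k-1-r)}(1)\,c_{r-j}.
\]
\item Use the binomial identity $\binom{k-1}{r}\binom{r}{j}=\binom{k-1}{j}\binom{k-1-j}{r-j}$ and put $n=k-1-j$, $m=r-j$. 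The sum becomes
\[
a_{k,j}=-\binom{k-1}{j}\sum_{m=0}^{n-1}\binom{n}{m}\Gamma^{(n-m)}(1)\,c_m .
\]
\item By Step 1 this equals $\binom{k-1}{j}c_n=\binom{k-1}{j}c_{k-1-j}$, which is the claim.
\end{itemize}

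\emph{Step 3: proof of part (2).} Write $t^{s-1}=e^{(s-1)\log t}$, so that $\frac{d^i}{ds^i}t^{s-1}\big|_{s=1}=\log^i t$. Leibniz's rule then gives
\[
\lim_{s\to1}\frac{d^{k-1}}{ds^{k-1}}\frac{t^{s-1}}{\Gamma(s)}=\sum_{j=0}^{k-1}\binom{k-1}{j}\log^j(t)\,c_{k-1-j}.
\]
By part (1) each coefficient $\binom{k-1}{j}c_{k-1-j}$ equals $a_{k,j}$, so the right-hand side is $\sum_{j}a_{k,j}\log^j t=S_k(t)$ by \eqref{Ipoly}.

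The only delicate point is the bookkeeping in Step 2: checking that the vanishing terms $a_{r+1,j}=0$ for $r<j$ line up with the summation range of the recursion, and fixing the notational reading of $\Gamma^{k-r-1}(1)$ as a derivative.
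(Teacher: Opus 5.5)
Your proof is correct. The paper gives no proof of this lemma; it imports it from \cite[Lemma 3.1]{Dixit}. Your argument is a self-contained derivation along the natural lines: the recursion for $a_{k,j}$ is the Leibniz expansion of $\Gamma(s)\cdot\Gamma(s)^{-1}=1$ at $s=1$, after reindexing with $\binom{k-1}{r}\binom{r}{j}=\binom{k-1}{j}\binom{k-1-j}{r-j}$. Part (2) then follows from Leibniz's rule applied to $t^{s-1}\cdot\Gamma(s)^{-1}$. The induction is set up properly: for $j\le k-2$ it only calls on $a_{r+1,j}$ with $r+1<k$, and it covers $j=0$. That case is needed for part (2), even though the statement writes $j\in\mathbb{N}$.

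Your two bookkeeping choices are genuinely forced, not merely convenient, and it is worth saying so explicitly.
\begin{itemize}
\item $\Gamma^{k-r-1}(1)$ must mean the derivative $\Gamma^{(k-r-1)}(1)$. Under the power reading one gets $a_{2,0}=-1$, whereas $c_1=\gamma$, so the lemma itself would be false. That, rather than ``consistency with the boundary values'', is the precise justification.
\item The recursion can only hold for $0\le j\le k-2$, with the convention $a_{r+1,j}=0$ for $r<j$. For $j=k-1$, every term of the stated sum vanishes, which would force $a_{k,k-1}=0$ and contradict $a_{k,k-1}=1$.
\end{itemize}
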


\section{Laurent series of $\mathcal{Z}(s,w,w^\prime)$ at $s=1$}\label{Laurent series}
In this section, we investigate the Laurent series expansion of the function $\mathcal{Z}(s,w,w^\prime)$ at $s=1$ and for that we decompose $\mathcal{Z}(s,w,w^\prime)$ into three parts and evaluate the Laurent coefficients of each parts individually. One of the key ingredients for these evaluation is the following function
\begin{align}\label{zetatilde}
	\tilde{\z}(s,(a_1,a_2),(x_1,x_2)):=\sum_{m=0}^{\infty}\sum_{n=0}^{\infty} \frac{(x_2+n)^s}{\prod\limits_{k=1,2}\{x_1+m+(x_2+n)a_k\}^s}, 
\end{align}
which is defined under the conditions $0<a_1<a_2$, $x\ge 0$, $x_2\ge0$, $(x_1,x_2)\neq(0,0)$ and $\Re(s)>2$. The next lemma provides the integral representations of the above function.
\begin{lemma}\label{Lem:2.1}
For $\Re(s)>2$, we have
	\begin{align*}
		\tilde{\z}(s,(a_1,a_2),(x_1,x_2))=\frac{1}{\Gamma^2(s)}\int_{0}^{\infty}\int_{0}^{\infty} (uv)^{s-1}\frac{e^{-x_1(u+v)}}{1-e^{-(u+v)}}\sum_{n=0}^{\infty}(x_2+n)^s e^{-(x_2+n)(a_1u+a_2v)} du dv.
	\end{align*}
\end{lemma}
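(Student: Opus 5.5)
The plan is to write each summand of \eqref{zetatilde} as a product of two Gamma-function integrals, one for each linear factor in the denominator. Then the geometric series in $m$ is summed inside the integral.

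Fix $m,n\ge 0$ and put $\alpha_k=x_1+m+(x_2+n)a_k$ for $k=1,2$. Under the standing hypotheses ($0<a_1<a_2$, $x_1,x_2\ge 0$, $(x_1,x_2)\neq(0,0)$) we have $\alpha_k>0$. For $\Re(s)>0$ we use
\begin{align*}
\alpha_k^{-s}=\frac{1}{\Gamma(s)}\int_0^\infty t^{s-1}e^{-\alpha_k t}\,dt .
\end{align*}
Taking $t=u$ for $k=1$ and $t=v$ for $k=2$ and multiplying gives
\begin{align*}
\frac{(x_2+n)^s}{\alpha_1^s\alpha_2^s}=\frac{(x_2+n)^s}{\Gamma^2(s)}\int_0^\infty\int_0^\infty (uv)^{s-1}e^{-(x_1+m)(u+v)}e^{-(x_2+n)(a_1u+a_2v)}\,du\,dv .
\end{align*}
Summing over $m\ge 0$ inside the integral, the factor $\sum_{m\ge0}e^{-(x_1+m)(u+v)}$ equals $e^{-x_1(u+v)}/(1-e^{-(u+v)})$ for $u+v>0$. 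Summing over $n$ then produces exactly the integrand of the lemma.

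The main obstacle is justifying the interchange of $\sum_{m,n}$ with the double integral. I would first work with real $s=\sigma>2$. There every term is nonnegative, so Tonelli's theorem gives equality of the (possibly infinite) quantities. Finiteness of the right-hand side then follows from convergence of the left-hand side for $\sigma>2$.

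That convergence holds because $\alpha_1\alpha_2\gg (m+n+1)^2$, using $a_1>0$ and $(x_1,x_2)\neq(0,0)$. Hence the summand is $\ll (x_2+n)^\sigma (m+n+1)^{-2\sigma}\le (m+n+1)^{-\sigma}$, and $\sum_{m,n}(m+n+1)^{-\sigma}$ converges for $\sigma>2$. The case $x_2=0$, $n=0$ gives a vanishing term when $\Re(s)>0$ and causes no trouble.

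For complex $s$ with $\Re(s)=\sigma>2$, the modulus of each term is bounded by the corresponding term with $s$ replaced by $\sigma$. So absolute integrability follows from the real case, and Fubini's theorem justifies the interchange. One should also check that the integrand's possible singularity $1/(1-e^{-(u+v)})\sim 1/(u+v)$ near the origin is integrable. This follows from $(uv)^{\sigma-1}$ with $\sigma>2$, but it is in any case already covered by the Tonelli argument.
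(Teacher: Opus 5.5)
Your proposal is correct and follows the paper's argument: represent each factor $\alpha_k^{-s}$ by a Gamma integral (the paper does this by substituting into $\Gamma^2(s)=\int_0^\infty\int_0^\infty (uv)^{s-1}e^{-u-v}\,du\,dv$), multiply by $(x_2+n)^s$, sum over $m,n$, and evaluate the geometric series in $m$. Your Tonelli/Fubini justification of the interchange is sound and supplies a step the paper leaves implicit. One small caveat: near the origin the $n$-sum also blows up like $(a_1u+a_2v)^{-\sigma-1}$, so integrability there does not come from $(uv)^{\sigma-1}$ alone, but as you note the Tonelli argument already covers this.
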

\begin{proof}
The integral representation of $\Gamma^2(s)$	yields for $\Re(s)>1$, we have
\begin{align*}
		\Gamma^2(s)=\int_{0}^{\infty} \int_{0}^{\infty}(uv)^{s-1} e^{-u} e^{-v}du dv.
	\end{align*}
Thus by substituting $u$ by $\{x_1+m+(x_2+n)a_1\}u$ and  $v$ by $\{x_1+m+(x_2+n)a_2\}v$, the above integral reduces to 
\begin{align*}
		\frac{\Gamma^2(s)}{\prod_{k=1,2}\{x_1+m+(x_2+n)a_k\}^s}=\int_{0}^{\infty} \int_{0}^{\infty} (uv)^{s-1} e^{-x_1(u+v)} e^{-m(u+v)} e^{-(x_2+n)(a_1u+a_2v)} du dv.
	\end{align*}
Finally, we multiply both sides of the above equation by $(x_2+n)^s$ and take double sum over the variables $m$ and $n$ from $0$ to $\infty$ on both sides to obtain
	\begin{align*}
		\Gamma^2(s)\tilde{\z}(s,(a_1,a_2),(x_1,x_2))=\int_{0}^{\infty}\int_{0}^{\infty} (uv)^{s-1}\frac{e^{-x_1(u+v)}}{1-e^{-(u+v)}}\sum_{n=0}^{\infty}(x_2+n)^s e^{-(x_2+n)(a_1u+a_2v)} du~ dv,
	\end{align*}
which concludes our lemma.
\end{proof}

In the next proposition, we decompose the function $	\tilde{\z}(s,(w^{\prime},w),(0,1))$ into three integrals.
\begin{proposition}\label{Prop2.2}
	The decomposition 
	\begin{align*}
		\tilde{\z}(s,(w^{\prime},w),(0,1))=\frac{(w-w^{\prime})^{1-2s}}{\Gamma^2(s)}\bigg[I_1(s)+I_2(s)+I_3(s)\bigg]
	\end{align*}
holds for $\Re(s)>2$, where
	\begin{align*}
		&I_1(s)=  \int_{w^\prime}^{w}\{(w-u)(u-w^\prime)\}^{s-1}  \frac{\partial \tilde{H}(u,2s-1)}{\partial u} du,\\
		&	I_2(s)=\Gamma(2s-1)\z(s-1)\int_{w^\prime}^w u^{1-2s}\{(w-u)(u-w^\prime)\}^{s-1} du,\\
		&	I_3(s)=\frac{1}{2}\Gamma(2s)\z(s)\int_{w^\prime}^w u^{-2s}\{(w-u)(u-w^\prime)\}^{s-1} du.
	\end{align*}
Moreover, the function $\tilde{\z}(s,(w^{\prime},w),(0,1))$  has analytic continuation to the half plane $\Re(s)>\frac{1}{2}$ with simple poles at $s=1$ and $s=2$. 
	
\end{proposition}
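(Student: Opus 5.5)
We would start from the integral representation of Lemma~\ref{Lem:2.1} with $(a_1,a_2)=(w',w)$ and $(x_1,x_2)=(0,1)$. Replacing $n+1$ by $n\ge 1$, the inner series becomes $\sum_{n\ge1}n^s e^{-n(w'u+wv)}=\mathrm{Li}_{-s}(e^{-(w'u+wv)})$. This gives
\begin{align*}
\Gamma^2(s)\,\tilde{\z}(s,(w',w),(0,1))=\int_0^\infty\!\!\int_0^\infty \frac{(uv)^{s-1}}{1-e^{-(u+v)}}\,\mathrm{Li}_{-s}\big(e^{-(w'u+wv)}\big)\,du\,dv .
\end{align*}

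\textbf{Change of variables.} Following Egami's idea, we introduce $t=u+v$ and a variable $x\in[w',w]$ defined by $w'u+wv=xt$. Explicitly, $u=t(w-x)/(w-w')$ and $v=t(x-w')/(w-w')$, with Jacobian $t/(w-w')$. Then
\begin{align*}
uv=\frac{t^2(w-x)(x-w')}{(w-w')^2},
\end{align*}
and the double integral becomes
\begin{align*}
(w-w')^{1-2s}\int_{w'}^{w}\{(w-x)(x-w')\}^{s-1}\int_0^\infty \frac{t^{2s-1}}{1-e^{-t}}\,\mathrm{Li}_{-s}(e^{-xt})\,dt\,dx .
\end{align*}
This already produces the prefactor $(w-w')^{1-2s}/\Gamma^2(s)$ in the statement.

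\textbf{Splitting the inner integral.} We use the identity $\frac{\partial}{\partial x}\mathrm{Li}_{-(s-1)}(e^{-xt})=-t\,\mathrm{Li}_{-s}(e^{-xt})$, so that $t^{2s-1}\mathrm{Li}_{-s}(e^{-xt})=-\partial_x\big[t^{2s-2}\mathrm{Li}_{-(s-1)}(e^{-xt})\big]$. We then write
\begin{align*}
\frac{1}{1-e^{-t}}=\Big(\frac{1}{e^t-1}-\frac1t+\frac12\Big)+\frac1t+\frac12 .
\end{align*}
The three pieces are handled as follows.
\begin{enumerate}
\item The bracketed piece, with the derivative pulled outside the $t$-integral, is exactly $\partial_x\tilde H(x,2s-1)$ by \eqref{Hfun} with $v=2s-1$, since $-(v-1)/2=-(s-1)$. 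This gives $I_1(s)$.
\item For the $1/t$ piece we use the Mellin evaluation $\int_0^\infty t^{a-1}\mathrm{Li}_{-b}(e^{-xt})\,dt=\Gamma(a)x^{-a}\zeta(a-b)$ with $a=2s-2$ and $b=s-1$. This gives $-\partial_x\big[\Gamma(2s-2)x^{2-2s}\zeta(s-1)\big]=\Gamma(2s-1)\zeta(s-1)x^{1-2s}$, which is $I_2(s)$.
\item The $1/2$ piece, integrated directly without the derivative trick, gives $\tfrac12\Gamma(2s)\zeta(s)x^{-2s}$, which is $I_3(s)$.
\end{enumerate}
For $\Re(s)>2$, all interchanges of sums, integrals and $\partial_x$ are justified by absolute convergence. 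Near $t=0$ we have $\mathrm{Li}_{-(s-1)}(e^{-xt})\sim\Gamma(s)(xt)^{-s}$ uniformly for $x\in[w',w]$, while the bracket is $O(t)$ near $0$ and bounded at $\infty$. Together with exponential decay at infinity, this gives dominated convergence.

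\textbf{Analytic continuation.} The factor $1/\Gamma^2(s)$ is entire, so it suffices to continue each $I_i$.
\begin{enumerate}
\item In $I_2$ and $I_3$, $u^{1-2s}$ and $u^{-2s}$ are smooth on $[w',w]$ because $w'>0$. The beta-type integral against $\{(w-u)(u-w')\}^{s-1}$ is holomorphic for $\Re(s)>0$. Hence the only singularities in $\Re(s)>\tfrac12$ come from $\zeta(s-1)$ at $s=2$ and $\zeta(s)$ at $s=1$. The pole of $\Gamma(2s-1)$ at $s=\tfrac12$ is what forces the boundary $\Re(s)>\tfrac12$. At real $s$ the integrals are positive, so these poles are genuinely simple and nonvanishing.
\item For $I_1$, by the local estimate above the integrand in $\tilde H(u,2s-1)$ is $O(t^{s-1})$ near $0$. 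Thus $\tilde H$ and its $u$-derivative are holomorphic for $\Re(s)>0$ (this is the condition $v>-1$), uniformly for $u\in[w',w]$. The outer integral then converges for $\Re(s)>0$, so $I_1$ contributes no poles.
\end{enumerate}

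\textbf{Main obstacle.} The delicate step is this uniform control of $\partial_u\tilde H(u,2s-1)$ on the compact interval $[w',w]$ as $s$ ranges over compacta in $\Re(s)>\tfrac12$. I would handle it by differentiating under the integral, which brings back $t^{2s-1}\mathrm{Li}_{-s}(e^{-ut})$, and bounding it with the same small-$t$ asymptotic together with exponential decay for large $t$.
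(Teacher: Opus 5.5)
Your proposal is correct and follows the paper's proof. Both arguments use Lemma~\ref{Lem:2.1} with $(x_1,x_2)=(0,1)$, the substitution $t=u+v$, $w'u+wv=tx$, the splitting $\frac{1}{1-e^{-t}}=\bigl(\frac{1}{e^t-1}-\frac1t+\frac12\bigr)+\frac1t+\frac12$, and termwise Mellin evaluation of the last two pieces. You are more explicit than the paper on three points: the identity $\partial_x\mathrm{Li}_{-(s-1)}(e^{-xt})=-t\,\mathrm{Li}_{-s}(e^{-xt})$ that identifies the first piece with $\partial_u\tilde H(u,2s-1)$, the convergence of $I_1$ for $\Re(s)>0$, and the nonvanishing of the residues at $s=1,2$.
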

\begin{proof}
The integral $I_1(s)$ converges for $\Re(s)>0$, which follows from the series representation \eqref{Polylog series} of $\mathrm{Li}_s(z)$, which occurs in the definition \eqref{Hfun} of $\tilde{H}(u,v)$ and the well-known series expansion of $\frac{t}{e^t-1}$ in terms of Bernoulli numbers (cf. \cite[p. 264]{Apostol}).
The integral $I_2(s)$ is analytic in $\Re(s)>\frac{1}{2}$ except for simple poles at $s=\frac{1}{2}$, $s=2$ and the integral $I_3(s)$ is analytic $\Re(s)>\frac{1}{2}$ except for simple poles at $s=1$. Both of the analytic properties of $I_2(s)$ and $I_3(s)$ follows directly from the analytic behaviour of Euler Gamma function and Riemann zeta function. 
	
We next apply Lemma \ref{Lem:2.1} to obtain
	\begin{align*}
	\tilde{\z}(s,(w^\prime,w),(0,1))= \frac{1}{\Gamma^2(s)}\int_{0}^{\infty}\int_{0}^{\infty} (uv)^{s-1}\frac{1}{1-e^{-(u+v)}}\mathrm{Li}_{-s}(e^{-(w^\prime u+w v)}) du dv.
	\end{align*}
Thus, the change of variables $u+v=t$ and $w^\prime u+w v=tx$  yields
	\begin{align*}
		&\tilde{\z}(s,(w^\prime,w),(0,1))\\
		&\qquad= \frac{(w-w^\prime)^{1-2s}}{\Gamma^2(s)}\int_{w^\prime}^{w}\{(w-u)(u-w^\prime)\}^{s-1}\int_{0}^{\infty} t^{2s-1}\frac{1}{1-e^{-t}}\mathrm{Li}_{-s}(e^{-ut})dt~ du \\
		&\qquad= \frac{(w-w^{\prime})^{1-2s}}{\Gamma^2(s)}\hspace{-.2cm}\int_{w^\prime}^{w}\hspace{-.2cm}\{(w-u)(u-w^\prime)\}^{s-1}\Bigg[\int_{0}^{\infty} t^{2s-1}\bigg(\frac{1}{e^t-1}-\frac{1}{t}+\frac{1}{2}\bigg) \mathrm{Li}_{-s}(e^{-ut}) dt~ du \\
		& \hspace{5cm} + \int_{0}^{\infty}t^{2s-2}\mathrm{Li}_{-s}(e^{-ut}) dt~ du + \frac{1}{2}\int_{0}^{\infty}t^{2s-1}\mathrm{Li}_{-s}(e^{-ut}) dt~ du \Bigg] \\
		&\qquad=\frac{(w-w^{\prime})^{1-2s}}{\Gamma^2(s)}\bigg[I_1(s)+I_2(s)+I_3(s)\bigg],
	\end{align*}
where in the final step, the definition \eqref{Hfun} of $\tilde{H}(u,v)$ provides the integral $I_1(s)$ and to obtain the integrals $I_2(s)$ and $I_3(s)$, we first apply the infinite series representaion of $\mathrm{Li}_{-s}(e^{-ut})$ and then interchange the summation and integration, which are justified due to the uniform convergence of the series  $\mathrm{Li}_{-s}(e^{-ut})$ on any compact interval of $(0,\infty)$. This completes the proof of the proposition.
	\end{proof}

The next lemma exhibits the integral representation of $\mathfrak{F}_k(x)$, defined in \eqref{New-Fk}.
	\begin{lemma}\label{Integral-Representation-New-Fk}
		For $\Re(x)>0$, we have
		\begin{align*}
			 \mathfrak{F}_k(x)=(-1)^{k}\sum_{r=0}^{k-1}\binom{k-1}{r}\int_{0}^\infty \bigg(\frac{1}{e^t-1}-\frac{1}{t} +\frac{1}{2}\bigg)S_{k-r}(t) \left(\sum_{n=1}^{\infty}\log^r(\sqrt{n})e^{-nxt}\right)   dt.
	\end{align*}
	\end{lemma}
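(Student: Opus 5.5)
The plan is to derive a Binet-type integral representation for each summand
\[
\psi_{m}(y)+\frac{\log^{m}(y)}{2y}-\frac{\log^{m+1}(y)}{m+1}, \qquad m=k-r-1,
\]
then substitute $y=nx$, multiply by $\log^r(1/\sqrt n)$, and sum over $n$ and $r$. The tool for the first step is the Mellin-type function
\[
G(s,y):=\frac{1}{\Gamma(s)}\int_0^\infty t^{s-1}\Big(\frac{1}{e^t-1}-\frac1t+\frac12\Big)e^{-yt}\,dt \qquad (\Re(y)>0).
\]
The bracket is $O(t)$ as $t\to0$, so $G(s,y)$ is analytic in $s$ for $\Re(s)>-1$. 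For $\Re(s)>1$ the three pieces can be integrated separately using $\int_0^\infty t^{s-1}e^{-(y+j)t}\,dt=\Gamma(s)(y+j)^{-s}$. This gives
\[
G(s,y)=\zeta(s,y)-\frac{y^{1-s}}{s-1}-\frac{y^{-s}}{2},
\]
and by analytic continuation the identity holds on a neighbourhood of $s=1$.

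\textbf{Step 1: expand at $s=1$.} I would differentiate this identity $m$ times in $s$ and set $s=1$.
\begin{itemize}
\item On the integral side, the derivative falls only on $t^{s-1}/\Gamma(s)$. By Lemma \ref{Dixit's-Lemma}(2) this produces $\int_0^\infty(\frac{1}{e^t-1}-\frac1t+\frac12)S_{m+1}(t)e^{-yt}\,dt$. Differentiation under the integral is justified because $S_{m+1}(t)$ is a polynomial in $\log t$.
\item On the other side, use $\zeta(s,y)=\frac{1}{s-1}+\sum_{n\ge0}\frac{(-1)^n\gamma_n(y)}{n!}(s-1)^n$ together with $\gamma_n(y)=-\psi_n(y)$.
\item Expand $y^{1-s}/(s-1)=\frac{1}{s-1}+\sum_{j\ge0}\frac{(-1)^{j+1}\log^{j+1}y}{(j+1)!}(s-1)^j$; its pole cancels the pole of $\zeta(s,y)$.
\item Expand $y^{-s}/2=\frac{1}{2y}\sum_j\frac{(-\log y)^j}{j!}(s-1)^j$.
\end{itemize}
Reading off the coefficient of $(s-1)^m$, times $m!$, should give
\[
\int_0^\infty\Big(\frac{1}{e^t-1}-\frac1t+\frac12\Big)S_{m+1}(t)e^{-yt}\,dt=(-1)^{m+1}\Big[\psi_m(y)+\frac{\log^m y}{2y}-\frac{\log^{m+1}y}{m+1}\Big].
\]
For $m=0$ this is the classical Binet formula $\psi(y)=\log y-\frac1{2y}-\int_0^\infty(\cdots)e^{-yt}\,dt$, which serves as a check on the signs.

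\textbf{Step 2: substitute and sum.} Take $m=k-r-1$, $y=nx$, and multiply by $\log^r(1/\sqrt n)=(-1)^r\log^r(\sqrt n)$. The combined sign is $(-1)^{k-r}(-1)^r=(-1)^k$, which is independent of $r$, as in the statement. Summing over $n$ and $r$ with the weights $\binom{k-1}{r}$ then gives the claimed formula, provided the sum over $n$ can be moved inside the integral.

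\textbf{Main obstacle: the interchange of sum and integral.} For $t$ bounded away from $0$, the factor $e^{-nt\Re(x)}$ gives absolute convergence. Near $t=0$ I would bound
\[
\sum_{n\ge1}\log^r(\sqrt n)\,e^{-n\Re(x)t}\ll t^{-1}\log^r(1/t)+1 .
\]
The bracket $\frac{1}{e^t-1}-\frac1t+\frac12$ is $\sim t/12$ as $t\to0$, and $|S_{k-r}(t)|$ grows at most polynomially in $|\log t|$. So the integrand of the summed absolute series is $O(\log^{N}(1/t))$ near $0$, which is integrable. Tonelli's theorem then justifies the interchange, with the same bounds giving validity for all $\Re(x)>0$.
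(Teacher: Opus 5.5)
Your proposal is correct and is essentially the paper's argument. The paper quotes the Binet-type identity $\psi_{k-r-1}(y)+\frac{\log^{k-r-1}(y)}{2y}-\frac{\log^{k-r}(y)}{k-r}=(-1)^{k-r}\int_0^\infty e^{-yt}\big(\frac{1}{e^t-1}-\frac1t+\frac12\big)S_{k-r}(t)\,dt$ from Ishibashi and Dixit et al., then sets $y=nx$, multiplies by $\log^r(1/\sqrt n)$ and sums, exactly as in your Step 2. The differences are that you derive this identity yourself from the Mellin representation of $\zeta(s,y)$ and Lemma \ref{Dixit's-Lemma}(2), and that you justify the interchange of $\sum_n$ and $\int_0^\infty$, which the paper leaves implicit.
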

	\begin{proof}
		It follows from \cite[Equations (2.8), (2.11)]{Ish03} we have  the following (cf. \cite[p. 30, Equation 9.1]{DSS24}):
		\begin{align*}
			\psi_{k-r-1}(x) + \frac{\log^{k-r-1}(x)}{2x} -\frac{\log^{k-r}(x)}{k-r}= (-1)^{k-r}\int_{0}^\infty e^{-xt} \bigg(\frac{1}{e^t-1}-\frac{1}{t} +\frac{1}{2}\bigg)S_{k-r}(t) \ dt.\nonumber
		\end{align*}
Thus, by replacing $x$ by $nx$ on the both side, we can derive $\mathfrak{F}_k(x)$ as
\begin{align*}
\mathfrak{F}_k(x)&= (-1)^{k}\sum_{r=0}^{k-1}\binom{k-1}{r}	\sum_{n=1}^{\infty}\log^r\left(\sqrt{n}\right)\int_{0}^\infty e^{-nxt} \bigg(\frac{1}{e^t-1}-\frac{1}{t} +\frac{1}{2}\bigg)S_{k-r}(t) \ dt,
\end{align*}
which concludes our lemma.
	\end{proof}
	
We abbreviate the notation of the $k$-th order partial derivative of the function $\tilde{H}(u,v)$, as defined in \eqref{Hfun}, with respect to $v$ at $v=1$ by $\partial^{(k)}_v\tilde{H}(u,1)$. The following lemma relates $\partial^{(k)}_v\tilde{H}(u,1)$ with the function $\mathfrak{F}_k(x)$.
\begin{lemma}\label{lem2.3}
For $u>0$, we have
		\begin{align*}
		 \partial^{(k)}_v\tilde{H}(u,1)
		=\sum_{j=0}^k b_{k,j+1} \mathfrak{F}_{j+1}(u),
	\end{align*}
where the constants $b_{k,j+1}$ can be defined through the recursion relation 
 \begin{align}\label{bkj}
 	b_{k,j+1}=-\sum_{r= j}^{k-1} a_{k+1,r}b_{r,j+1} \quad \qquad (0\le j\le k-1),
 \end{align}
with the initial conditions $b_{0,1}=1$ and $b_{k,k+1}=(-1)^k$.	
\end{lemma}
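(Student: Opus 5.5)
The plan is to differentiate $\tilde{H}(u,v)$ in $v$ under the integral sign. Both $\partial^{(k)}_v\tilde{H}(u,1)$ and the integral representation of $\mathfrak{F}_{j+1}(u)$ in Lemma \ref{Integral-Representation-New-Fk} can then be written as integrals of
\[
B(t):=\frac{1}{e^t-1}-\frac{1}{t}+\frac12
\]
against a polynomial in the single variable $L:=\log(t\sqrt{n})$. The lemma then reduces to inverting a unitriangular change of basis between powers of $L$ and Ishibashi-type polynomials in $L$.

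\textbf{Step 1: differentiate $\tilde{H}$.} Expanding the polylogarithm by its series,
\[
\tilde{H}(u,v)=-\sum_{n\ge1}\int_0^\infty (t\sqrt{n})^{v-1}B(t)e^{-nut}\,dt .
\]
This expansion and all $v$-derivatives are justified by dominated convergence: $B(t)=O(t)$ as $t\to0$, $B$ is bounded at infinity, the factor $e^{-nut}$ decays, and $\log$-powers of $t$ and $n$ are harmless. Hence
\[
\partial^{(k)}_v\tilde{H}(u,1)=-\sum_{n\ge1}\int_0^\infty L^k\,B(t)e^{-nut}\,dt .
\]

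\textbf{Step 2: rewrite $\mathfrak{F}_{j+1}$ in terms of $L$.} By Lemma \ref{Dixit's-Lemma}(2),
\[
S_{m}(t)=\frac{d^{m-1}}{ds^{m-1}}\Big[\frac{t^{s-1}}{\Gamma(s)}\Big]_{s=1}.
\]
Leibniz's rule applied to $(t\sqrt n)^{s-1}/\Gamma(s)=(\sqrt n)^{s-1}\cdot t^{s-1}/\Gamma(s)$ gives
\[
\sum_{r=0}^{j}\binom{j}{r}\log^r(\sqrt n)\,S_{j+1-r}(t)=\frac{d^{j}}{ds^{j}}\Big[\frac{(t\sqrt n)^{s-1}}{\Gamma(s)}\Big]_{s=1}=:P_j(L).
\]
By part (1) of the same lemma, or simply because $P_j(L)$ is $S_{j+1}$ evaluated at $t\sqrt n$, we have $P_j(L)=\sum_{r=0}^{j}a_{j+1,r}L^r$ with $a_{j+1,j}=1$. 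Lemma \ref{Integral-Representation-New-Fk} therefore reads
\[
\mathfrak{F}_{j+1}(u)=(-1)^{j+1}\sum_{n\ge1}\int_0^\infty P_j(L)\,B(t)e^{-nut}\,dt .
\]

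\textbf{Step 3: invert the triangular system.} Since the leading coefficient of $P_k$ is $1$, we can solve for the top power:
\[
L^k=P_k(L)-\sum_{r=0}^{k-1}a_{k+1,r}L^r .
\]
Inductively this gives $L^k=\sum_{j=0}^k\beta_{k,j}P_j(L)$, where $\beta_{k,k}=1$, $\beta_{0,0}=1$, and
\[
\beta_{k,j}=-\sum_{r=j}^{k-1}a_{k+1,r}\beta_{r,j}\qquad (j<k).
\]
Substituting into Step 1 and comparing with Step 2 yields
\[
\partial^{(k)}_v\tilde{H}(u,1)=\sum_{j=0}^k(-1)^j\beta_{k,j}\,\mathfrak{F}_{j+1}(u).
\]
Now set $b_{k,j+1}:=(-1)^j\beta_{k,j}$. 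The recursion is linear in the second index with the sign $(-1)^j$ fixed, so it is preserved, and this gives exactly \eqref{bkj}. The initial conditions also match: $b_{0,1}=1$ and $b_{k,k+1}=(-1)^k$.

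\textbf{Main obstacle.} The only genuinely delicate point is the bookkeeping in Step 2. One must check that the binomially weighted combination of $\log^r\sqrt n$ and $S_{k-r}(t)$ in \eqref{New-Fk} really collapses to a single polynomial in $L=\log(t\sqrt n)$ with the same coefficients $a_{k,r}$. Once this is confirmed via the derivative representation of $S_k$, the argument reduces to a sign check and to justifying the interchange of sum, integral and $v$-differentiation, both of which are routine.
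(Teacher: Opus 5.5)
Your proof is correct and is essentially the paper's argument. Both differentiate $\tilde{H}$ under the integral sign, use the derivative representation of $S_k$ from Lemma~\ref{Dixit's-Lemma} to obtain the unitriangular relation $\sum_{j=0}^{k}a_{k+1,j}\,\partial_v^{(j)}\tilde{H}(u,1)=(-1)^k\mathfrak{F}_{k+1}(u)$ (your $P_k(L)=S_{k+1}(t\sqrt{n})=\sum_r a_{k+1,r}L^r$), and invert it inductively to get the recursion for $b_{k,j+1}$. Working in the single variable $L=\log(t\sqrt{n})$ makes the binomial collapse clearer than the paper's regrouping, and your signs are consistent with $b_{k,k+1}=(-1)^k$; the paper's displayed inductive step instead has $-(-1)^k\mathfrak{F}_{k+1}(u)$ where $+(-1)^k\mathfrak{F}_{k+1}(u)$ is meant, and $\mathfrak{F}_i$ where $\mathfrak{F}_{i+1}$ is meant.
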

\begin{proof}
	We first differentiate $j$ times the function $\tilde{H}(u,v)$ with respect to $v$ to obtain
	\begin{align*}
		\frac{\partial^{j}}{\partial v^j}[\tilde{H}(u,v)]
		=-\sum_{r=0}^j \binom{j}{r} \int_{0}^{\infty} t^{v-1}\log^{j-r} t \bigg(\frac{1}{e^t-1}-\frac{1}{t}+\frac{1}{2}\bigg)\frac{\partial^r}{\partial v^r}[\mathrm{Li}_{-\frac{v-1}{2}}(e^{-ut})] dt.
	\end{align*}
Therefore, we can write
	\begin{align}\label{Akj-H}
		&\sum_{j=0}^{k}a_{k+1,j}\frac{\partial^{(j)}}{\partial v^j}[\tilde{H}(u,v)]\nonumber\\
		&=-	\sum_{j=0}^{k}a_{k+1,j}\sum_{r=0}^j \binom{j}{r} \int_{0}^{\infty} t^{v-1}\log^{j-r} t \bigg(\frac{1}{e^t-1}-\frac{1}{t}+\frac{1}{2}\bigg)\frac{\partial^r}{\partial v^r}[\mathrm{Li}_{-\frac{v-1}{2}}(e^{-ut})] dt\nonumber\\
		&=-\sum_{r=0}^{k}\sum_{j=r}^{k}\int_{0}^{\infty}t^{v-1}\bigg(\frac{1}{e^t-1}-\frac{1}{t}+\frac{1}{2}\bigg) \binom{j}{r} a_{k+1,j} \log^{j-r}(t) \frac{\partial^r}{\partial v^r}[\mathrm{Li}_{-\frac{v-1}{2}}(e^{-ut})] dt\nonumber\\
		&=-\sum_{r=0}^{k}\sum_{j=0}^{k-r}\int_{0}^{\infty}t^{v-1}\bigg(\frac{1}{e^t-1}-\frac{1}{t}+\frac{1}{2}\bigg) \binom{j+r}{r} a_{k+1,j+r} \log^{j}(t) \frac{\partial^r}{\partial v^r}[\mathrm{Li}_{-\frac{v-1}{2}}(e^{-ut})] dt,
	\end{align}
where in the penultimate step, the order of the double summation has been interchanged and in the last step, we substituted $j$ by $j+r$ in the second summation. Lemma \ref{Dixit's-Lemma} simplifies the following expression as, 
\begin{align}\label{In-terms-of-S(k-r)t}
	\sum_{j=0}^{k-r}\binom{j+r}{r} a_{k+1,j+r} \log^j(t)&=\sum_{j=0}^{k-r}\binom{j+r}{r} \binom{k}{j+r}\lim_{s\to 1}\frac{d^{k-r-j}}{d s^{k-j-r}}\left[\frac{1}{\Gamma(s)}\right] \log^{j}(t)\nonumber\\
	&=\sum_{j=0}^{k-r}\binom{k}{r}\binom{k-r}{j}\lim_{s\to 1}\frac{d^{k-r-j}}{d s^{k-j-r}}\left[\frac{1}{\Gamma(s)}\right] \log^{j}(t)\nonumber\\
	&=\binom{k}{r}\sum_{j=0}^{k-r} a_{k+1-r,j} \log^{j}(t)= \binom{k}{r} S_{k+1-r}(t),
\end{align}
where in the last step, the definition \eqref{Ipoly} of $S_k(t)$ has been applied. Inserting \eqref{In-terms-of-S(k-r)t} into \eqref{Akj-H}, we arrive at
\begin{align*}
		&\sum_{j=0}^{k}a_{k+1,j}\frac{\partial^{(j)}}{\partial v^j}[\tilde{H}(u,v)]\\&=-\sum_{r=0}^{k}\binom{k}{r} \int_{0}^{\infty}t^{v-1}\bigg(\frac{1}{e^t-1}-\frac{1}{t}+\frac{1}{2}\bigg) S_{k+1-r}(t) \frac{\partial^r}{\partial v^r}[\mathrm{Li}_{-\frac{v-1}{2}}(e^{-ut})] dt\\
	&=-\sum_{r=0}^{k}\binom{k}{r} \int_{0}^{\infty}t^{v-1}\bigg(\frac{1}{e^t-1}-\frac{1}{t}+\frac{1}{2}\bigg) S_{k+1-r}(t) \sum_{n=1}^{\infty} n^{\frac{v-1}{2}}\log^r(\sqrt{n}) e^{-nxt} dt.
\end{align*}
Therefore, the integral representation in Lemma \ref{Integral-Representation-New-Fk}, of $\mathfrak{F}_k(u)$  turns the above equation at $v=1$ into
	\begin{align}\label{Sum-H-relation-Fk}
		\sum_{j=0}^{k}a_{k+1,j}	\partial_v^{(j)} \tilde{H}(u,1)=(-1)^{k} \mathfrak{F}_{k+1}(u).		
	\end{align} 
Here, we want to show
	\begin{align*}
		\partial^{(k)}_v\tilde{H}(u,1)
		=\sum_{j=0}^k b_{k,j+1} \mathfrak{F}_k(u).
	\end{align*}
and for that we will apply principle of mathematical induction on $k$. The case $k=0$ directly follows from \eqref{Sum-H-relation-Fk}. Let our claim be true for $0 \le j\le k-1$. Thus, it follows from \eqref{Sum-H-relation-Fk} that
	\begin{align*}
		\partial_v^{(k)} \tilde{H}(u,1)&=-\sum_{j=0}^{k-1}a_{k+1,j}	\sum_{i=0}^j b_{j,i+1} \mathfrak{F}_i(u)-(-1)^k\mathfrak{F}_{k+1}(u)\nonumber\\
		&=-\sum_{i=0}^{k-1}\sum_{j=i}^{k-1} a_{k+1,j}b_{j,i+1} \mathfrak{F}_i(u)-(-1)^k\mathfrak{F}_{k+1}(u),
	\end{align*}
where in the last step we have interchanged the order of the double summation. Finally, the definition \eqref{bkj} of the coefficients $b_{k,i+1}$ concludes that the induction hypothesis is true for $j=k$. This completes the proof of the lemma.
\end{proof}

We are now ready to evaluate the Laurent coefficients of the function $\mathcal{Z}(s,w,w^\prime)$ at $s=1$.
\subsection{Proof of Theorem \ref{Thm-1.2}} 
We begin by invoking the definition \eqref{zetatilde} of $\tilde{\z}(s,(a_1,a_2),(x_1,x_2))$ and then apply Proposition \ref{Prop2.2} to express  the function $\mathcal{Z}(s,w,w^\prime)$ in the form
	\begin{align}\label{Z decomp}
		\mathcal{Z}(s,w,w^\prime)&=(w-w^\prime)^s\tilde{\z}(s,(w^\prime,w),(0,1))\nonumber\\
	&=A(s)\bigg[I_1(s)+I_2(s)+I_3(s)\bigg],
	\end{align}
for $\Re(s)>2$, where $A(s)$ denotes the function 
\begin{align*}
A(s):= \frac{(w-w^{\prime})^{1-s}}{\Gamma^2(s)},
\end{align*}
and the integrals $I_1(s)$, $I_2(s)$ and $I_3(s)$ are already defined in Proposition \ref{Prop2.2}. We next determine the Laurent series of the functions $A(s)$, $I_1(s)$, $I_2(s)$ and $I_3(s)$ respectively at $s=1$. 

The function $A(s)$ is analytic at $s=1$ and for $i\geq 0$, the $i$-th coefficient of the Laurent series expansion at $s=1$ is given by (cf. \cite[p. 66]{Ish03})	
	\begin{align}\label{A(s)Coeff}
		a_i =\sum_{l+m=i}\frac{(-1)^l}{l!} \sum_{\sum_{r=1}^mrs_r=m} \bigg[\prod_{j=1}^m(s_j+1)\bigg(\frac{-\Gamma^{(j)}(1)}{j!}\bigg)^{s_j}\bigg]\log^l(w-w^\prime).
	\end{align}

The $j$-th Laurent coefficient $b_j$ of the integral  $I_1(s)$ at $s=1$ can be determined from $I_1^{(j)}(1)$, the $j$-th derivative of $I_1(s)$ at $s=1$, which is given by
\begin{align*}
I_1^{(j)}(1)&=\sum_{r=0}^j \binom{j}{r} 2^r \int_{w^\prime}^w \log^{j-r}\{(w-u)(u-w^\prime)\} \frac{ \partial}{\partial u}[\partial^r_s\tilde{H}(u,1)] du.
\end{align*}
Invoking Lemma \ref{lem2.3}, the integral appeared in the final term of the above finite sum evaluates as
\begin{align*}
\int_{w^\prime}^w\frac{ \partial\partial^j_s\tilde{H}(u,1)}{\partial u}du = \partial^j_s\tilde{H}(w,1)-\partial^j_s\tilde{H}(w^\prime,1)
= \sum_{l=0}^j b_{j,l+1}\bigg[\mathfrak{F}_{l+1}(w)-\mathfrak{F}_{l+1}(w^{\prime})\bigg].
\end{align*}
Therefore, we evaluate
	\begin{align}\label{I1coeff}
		I_1^{(j)}(1)=&\sum_{r=0}^{j-1} \binom{j}{r} 2^r \int_{w^\prime}^w \log^{j-r}\{(w-u)(u-w^\prime)\} \frac{ \partial}{\partial u}[\partial^r_s\tilde{H}(u,1)] du \nonumber\\
		&\hspace{7cm}+ 2^j\sum_{l=0}^j b_{j,l+1}\bigg[\mathfrak{F}_{l+1}(w)-\mathfrak{F}_{l+1}(w^{\prime})\bigg].
	\end{align}
The other integral $I_2(s)$ is the product of three functions, which are analytic at $s=1$, namely
\begin{align*}
		I_2(s)=\Gamma(2s-1)\z(s-1)g(s)
\end{align*}
where 
\begin{align*}
		g(s)=\int_{w^\prime}^w u^{1-2s}\{(w-u)(u-w^\prime)\}^{s-1} du.
	\end{align*}
The Laurent series of $I_2(s)$ at $s=1$ is essentially the Cauchy product of the Laurent series of $\Gamma(2s-1)$, $\z(s-1)$ and $g(s)$ at $s=1$. We have the Laurent expansions 
	\begin{align*}
		\Gamma(2s-1)=\sum_{n=0}^{\infty}\frac{2^n\Gamma^{(n)}(1)}{n!}(s-1)^n, \quad \z(s-1)=\sum_{n=0}^\infty \frac{\z^{(n)}(0)}{n!}(s-1)^n
	\end{align*}
 at $s=1$. Also, the $n$-th derivative of $g(s)$ at $s=1$ can be simplified to
\begin{align*}
		g^{(n)}(1) = \sum_{q+r\le n}(-2)^r \binom{n}{r} \binom{n-r}{q} I_{r,q,n-r-q}(w,w^\prime),
	\end{align*}
where the integral $I_{a,b,c}(u,v)$ is already defined in \eqref{int I}. Thus, for $j\geq 0$, the $j$-th Laurent coefficient $c_j$ of $I_2(s)$ at $s=1$ can be written as
	\begin{align}\label{I2coeff}
		c_j =\sum_{l_1+l_2+l_3=j}\sum_{q+r\le l_3} \frac{(-1)^r 2^{l_2+r}\z^{(l_1)}(0)\Gamma^{(l_2)}(1)}{l_1!l_2!l_3!} \binom{l_3}{r}\binom{l_3-r}{q} I_{r,q,l_3-r-q}(w,w^\prime).
	\end{align}
The  third integral $I_3(s)$ is also the product of three functions namely,
\begin{align*}
		I_3(s)=\frac{1}{2}\Gamma(2s)\z(s) h(s),
\end{align*}
where
\begin{align*}
		h(s)=\int_{w^\prime}^w u^{-2s}\{(w-u)(u-w^\prime)\}^{s-1} du.
	\end{align*}
Therefore, the Laurent series of $I_3(s)$ at $s=1$ is essentially the Cauchy product of the Laurent series of $\Gamma(2s)$, $\z(s)$ and $h(s)$ at $s=1$. Here at $s=1$, the function $\Gamma(2s)$ is analytic  and the function $\z(s)$ has a simple pole. The Laurent expansions of $\Gamma(2s)$ and $\z(s)$ at $s=1$ are given by
	\begin{align*}
	\Gamma(2s)=\sum_{n=0}^{\infty}\frac{2^n\Gamma^{(n)}(2)}{n!}(s-1)^n, \quad \z(s)=\sum_{n=-1}^{\infty} \tilde{\gamma_n}(s-1)^n.
	\end{align*}
The integral $h(s)$ is analytic at $s=1$ and by applying an argument similar to the one used to derive the Laurent series of $g(s)$ at $s=1$, we obtain the expansion
	\begin{align*}
		h(s)= \sum_{n=0}^{\infty} \left[\sum_{q+r\le n}(-2)^r \binom{n}{r} \binom{n-r}{q} J_{r,q,n-r-q}(w,w^\prime)\right] (s-1)^n,
	\end{align*}
where the integral $J_{a,b,c}(u,v)$ is already defined in \eqref{int J}.	Thus, for $j\geq -1$, the $j$-th Laurent coefficient $d_j$ of $I_3(s)$ at $s=1$ can be written as
	\begin{align}\label{I3coeff}
		d_j =\frac{1}{2}\sum_{\substack{l_1+l_2+l_3=j \\ l_1\ge -1}}\sum_{q+r\le l_3}\frac{(-1)^r2^{r+l_2} \tilde{\gamma}_{l_1}\Gamma^{(l_2)}(2)}{l_2! l_3!}\binom{l_3}{r} \binom{l_3-r}{q} J_{r,q,l_3-r-q}(w,w^\prime).
	\end{align} 
Finally by applying \eqref{A(s)Coeff}, \eqref{I1coeff}, \eqref{I2coeff} and \eqref{I3coeff}, we have the Laurent series of $A(s), I_1(s), I_2(s)$ and $I_3(s)$ at $s=1$, which can be inserted into the decomposition \eqref{Z decomp} to obtain the Laurent series of $\mathcal{Z}(s,w,w^\prime)$ at $s=1$. This completes the proof of the theorem.	 \qed

We next explicitly evaluate the principal part and the constant term in the Laurent expansion of $\mathcal{Z}(s,w,w^\prime)$ at $s=1$ from Theorem \ref{Thm-1.2}.
\subsection{Proof of Corollary \ref{cor Rahul}}
 It follows from Theorem \ref{Thm-1.2} that the coefficient of the principal part in the Laurent expansion of  $\mathcal{Z}(s,w,w^\prime)$ at $s=1$ is given by
    	 \begin{align*}
    		\tilde{P}_{-1}(w,w^\prime)=a_0d_{-1}.
    	\end{align*}
where $a_0=1$ and $d_{-1} = \frac{1}{2}J_{0,0,0}(w,w^\prime)$. The definition \eqref{int J} reduces $d_{-1}$ as
    	\begin{align}\label{d-1}
    		d_{-1} = \frac{1}{2}\int_{w^\prime}^{w} \frac{1}{x^2} dx=\frac{w-w^\prime}{2ww^\prime}.
    	\end{align}
Therefore, the coefficient $\tilde{P}_{-1}(w,w^\prime)$ can be evaluated as  
\begin{align*}
\tilde{P}_{-1}(w,w^\prime) = \frac{w-w^\prime}{2ww^\prime}.
\end{align*}   	
Employing Theorem \ref{Thm-1.2}, we can write the constant term in the Laurent coefficient of  $\mathcal{Z}(s,w,w^\prime)$ at $s=1$ as
    	\begin{align}\label{P0}
    		\tilde{P}_0(w,w^\prime)= a_0(b_0+c_0+d_0)+a_1d_{-1},
    	\end{align}
    	where \begin{align*}
    		&a_1= -\log{(w-w^\prime)}-2\gamma, \, 
    		b_0=\mathfrak{F}_1(w)-\mathfrak{F}_1(w^\prime),\, 
    		c_0=-\frac{1}{2}\log{\frac{w}{w^\prime}},\\
    		&d_0=\frac{\gamma}{2}\frac{w-w^\prime}{ww^\prime}+(1-\gamma)\frac{w-w^\prime}{ww^\prime}+	\frac{1}{2}J_{0,0,1}(w,w^\prime)+\frac{1}{2}J_{0,1,0}(w,w^\prime)-J_{1,0,0}(w,w^\prime).
    	\end{align*}

We next determine the value of the constant $d_0$ by evaluating the integrals  $J_{0,0,1}(w,w^\prime)$, $J_{0,1,0}(w,w^\prime)$ and $J_{1,0,0}(w,w^\prime)$ respectively from definition  \eqref{int J}. The improper integral $J_{0,0,1}(w,w^\prime)$ can be derived as
\begin{align*}
    		J_{0,0,1}(w,w^\prime)&=\int_{w^\prime}^{w} \frac{\log{(x-w^\prime)}}{x^2} dx \\
    		&= \lim_{\epsilon \to 0^+} \int_{w^\prime+\epsilon}^{w} \frac{\log{(x-w^\prime)}}{x^2} dx 
    		=\frac{w-w^\prime}{ww^\prime}\log{(w-w^\prime)} -\frac{\log{w}}{w^\prime}+\frac{\log{w^\prime}}{w^\prime},
    	\end{align*}
where we mainly applied integration by parts and the fact $\lim\limits_{\epsilon \to 0^+}\epsilon \log{\epsilon}=0$. Following the similar argument, the integral $J_{0,0,1}(w,w^\prime)$ evaluates as
\begin{align*}
    		J_{0,1,0}(w,w^\prime) = \frac{w-w^\prime}{ww^\prime}\log{(w-w^\prime)} -\frac{\log{w}}{w}+\frac{\log{w^\prime}}{w}.
    	\end{align*}
Applying integration by parts, the third integral $J_{1,0,0}(w,w^\prime)$ simplifies to
    		\begin{align*}
    		J_{1,0,0}(w,w^\prime)= \int_{w^\prime}^{w} \frac{\log x}{x^2} dx=-\frac{w-w^\prime}{ww^\prime}-\frac{\log{w}}{w}+\frac{\log{w^\prime}}{w^\prime}.
    	\end{align*}
Thus, the value of the above three integrals determines the constant $d_0$ as
    		\begin{align}\label{d0}
    		d_0 =\frac{w-w^\prime}{ww^\prime}\log{(w-w^\prime)}-\gamma\frac{w-w^\prime}{2ww^\prime}-\frac{w-w^\prime}{2ww^\prime}\log{(ww')}.
    	\end{align}
Finally, by inserting the value of the constants $a_0, a_1, b_0, c_0, d_{-1}$ and $d_0$ from \eqref{d-1} and \eqref{d0} into \eqref{P0}, the constant term in the Laurent expansion of $\mathcal{Z}(s,w,w^\prime)$ at $s=1$, can be derived as 
    	\begin{align*}
    		\tilde{P}_0(w,w^\prime)= a_0(b_0+c_0+d_0)+a_1d_{-1}=\mathfrak{F}_1(w)-\mathfrak{F}_1(w')-\frac{1}{2}\log{\frac{w}{w'}}+\frac{w-w^\prime}{2ww^\prime}\bigg[\gamma+\log{\frac{w-w'}{ww'}}\bigg].
    	\end{align*} 
This concludes the proof of the corollary.   	\qed
 
 \section{Arithmetic properties of $\mathfrak{F}_k^0(x)$}\label{Arithmetic properties}
The function $\mathfrak{F}_k(x)$ plays a central role in the Laurent series expansion of $\mathcal{Z}(s,w,w^\prime)$. This section concerns about the arithmetic properties of the function $\mathfrak{F}_k^0(x)$, defined in \eqref{Dixit's-Fk}, which is basically the first term of the finite sum in the function $\mathfrak{F}_k(x)$ and for that we have investigated the arithmetic properties of the modified function $\mathcal{F}_k(x)$ defined in \eqref{Generalized-Ramanujan-Period-Function}. The following lemma states the integral representation of the function $\mathfrak{F}_k^0(x)$.
\begin{lemma}\label{Lemma:3.2}
For $\Re(x)>0$, we have
     \begin{align*}
        \mathfrak{F}_k^0(x)=(-1)^{k}\int_{0}^{\infty}\bigg[\frac{1}{e^{t}-1}-\frac{1}{t}+\frac{1}{2}\bigg] \frac{S_k(t)}{e^{xt}-1} dt.
    \end{align*}
\end{lemma}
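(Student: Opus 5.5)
The plan is to specialise the argument of Lemma \ref{Integral-Representation-New-Fk} to the $r=0$ term and then sum the resulting geometric series in $n$. The starting point is the identity quoted in that proof from \cite[Equations (2.8), (2.11)]{Ish03}. Taking $r=0$ there, for $\Re(y)>0$ it reads
\begin{align*}
\psi_{k-1}(y)+\frac{\log^{k-1}(y)}{2y}-\frac{\log^{k}(y)}{k}=(-1)^{k}\int_{0}^{\infty}e^{-yt}\bigg(\frac{1}{e^t-1}-\frac{1}{t}+\frac{1}{2}\bigg)S_k(t)\,dt .
\end{align*}
For $\Re(x)>0$ I will put $y=nx$ and sum over $n\ge 1$. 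By the definition \eqref{Dixit's-Fk}, the left-hand side becomes $\mathfrak{F}_k^0(x)$. Equivalently, this is just the $r=0$ summand of Lemma \ref{Integral-Representation-New-Fk}, where $\log^0(\sqrt n)=1$.

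On the right-hand side I will interchange the sum and the integral. Then $\sum_{n\ge1}e^{-nxt}=\frac{1}{e^{xt}-1}$ for $t>0$, which gives exactly the claimed formula.

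The main obstacle is justifying the interchange, which I will do by dominated convergence (Tonelli applied to absolute values). Write $g(t)=\frac{1}{e^t-1}-\frac1t+\frac12$.
\begin{itemize}
\item Near $0$: $g(t)=\frac{t}{12}+O(t^3)$, while $|S_k(t)|\ll 1+|\log t|^{k-1}$ and $\sum_n e^{-n\Re(x)t}\ll \frac{1}{\Re(x)t}$. The integrand is therefore $O(1+|\log t|^{k-1})$, which is integrable on $(0,1]$.
\item Near $\infty$: $g(t)$ is bounded, $S_k(t)$ grows only polylogarithmically, and $\sum_n e^{-n\Re(x)t}\ll e^{-\Re(x)t}$, so the integrand is integrable on $[1,\infty)$.
\end{itemize}
Hence $\int_0^\infty |g(t)S_k(t)|\sum_n e^{-n\Re(x)t}\,dt<\infty$, and the interchange is valid for all $\Re(x)>0$. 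This also confirms that the series defining $\mathfrak{F}_k^0(x)$ converges there, consistent with the asymptotics recalled in the introduction.
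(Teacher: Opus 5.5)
Your proposal is correct and takes the paper's route. The paper only says the proof follows the lines of Lemma \ref{Integral-Representation-New-Fk}, which is exactly your argument: take the $r=0$ case of Ishibashi's identity, substitute $nx$, sum over $n$, and sum the geometric series under the integral. Your Tonelli estimates, using $g(t)=\tfrac{t}{12}+O(t^3)$ against $\tfrac{1}{\Re(x)t}$ near $0$ and exponential decay near $\infty$, supply the justification for the sum--integral interchange that the paper leaves implicit.
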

\noindent The proof follows along the similar lines of the proof of Lemma \ref{Integral-Representation-New-Fk}. We next consider the integral 
\begin{align}\label{2.4}
	\mathcal{I}_k(x,s):=- \int_0^\infty \Bigg[ \frac{1}{(e^t-1)(e^{xt}-1)}+ \frac{1}{2(e^{xt}-1)}+\frac{1}{2(e^t-1)}\Bigg] \frac{d^{k-1}}{ds^{k-1}}\Bigg[ \frac{t^{s-1}}{\Gamma(s)}\Bigg] dt,
\end{align}
which is defined for $\Re(x)>0$ and $\Re(s)>0$. In the next lemma, we establish that as $s \to 1$ the above integral $\mathcal{I}_k(x,s)$ reduces to $\mathcal{F}_k(x)$. 
\begin{lemma}\label{modiFkx}
For $\Re(x)>0$, we have  $\lim\limits_{s \to1 }	\mathcal{I}_k(x,s)=\mathcal{F}_k(x)$.
\end{lemma}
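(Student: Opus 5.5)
We prove Lemma \ref{modiFkx} by splitting the bracket in \eqref{2.4} into a piece that tends to the integral representation of Lemma \ref{Lemma:3.2} and a remainder computed through Mellin transforms. Since $\frac{1}{(e^t-1)(e^{xt}-1)}+\frac{1}{2(e^{xt}-1)} = \frac{1}{e^{xt}-1}\big(\frac{1}{e^t-1}-\frac1t+\frac12\big)+\frac{1}{t(e^{xt}-1)}$, we write
\begin{align*}
\mathcal{I}_k(x,s)=-\int_0^\infty \frac{1}{e^{xt}-1}\Big(\frac{1}{e^t-1}-\frac1t+\frac12\Big)\frac{d^{k-1}}{ds^{k-1}}\Big[\frac{t^{s-1}}{\Gamma(s)}\Big]dt-\frac{d^{k-1}}{ds^{k-1}}\Big[\frac{1}{\Gamma(s)}\int_0^\infty\Big(\frac{t^{s-2}}{e^{xt}-1}+\frac{t^{s-1}}{2(e^t-1)}\Big)dt\Big].
\end{align*}
The split is legitimate only for $\Re(s)>2$, where each piece converges separately; the first integral, however, converges for $\Re(s)>0$. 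The second bracket equals
\[
\frac{\Gamma(s-1)\zeta(s-1)x^{1-s}+\frac12\Gamma(s)\zeta(s)}{\Gamma(s)}=\frac{x^{1-s}\zeta(s-1)}{s-1}+\frac{\zeta(s)}{2}.
\]
Both sides are analytic in $s$ away from $s=1,2$, so the identity continues to a punctured neighbourhood of $s=1$.

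As $s\to1$, Lemma \ref{Dixit's-Lemma}(2) shows that the $s$-derivative inside the first integral tends to $S_k(t)$. By Lemma \ref{Lemma:3.2}, the first term therefore tends to $-(-1)^k\mathfrak{F}_k^0(x)=(-1)^{k-1}\mathfrak{F}_k^0(x)$. The interchange of limit and integral is justified by dominated convergence. Near $t=0$, the factor $\big(\frac{1}{e^t-1}-\frac1t+\frac12\big)/(e^{xt}-1)$ is bounded, since the bracket is $O(t)$ and $1/(e^{xt}-1)=O(1/t)$; against $t^{s-1}\log^j t$ this is integrable for $s$ near $1$. At infinity the decay is exponential.

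The main work is the remainder $-\frac{d^{k-1}}{ds^{k-1}}\big[\frac{x^{1-s}\zeta(s-1)}{s-1}+\frac{\zeta(s)}{2}\big]$, whose $s\to1$ limit must be finite. This is where the real obstacle lies: individually each summand has a pole at $s=1$, with $\zeta(s)/2\sim \frac{1}{2(s-1)}$ and $\frac{x^{1-s}\zeta(s-1)}{s-1}\sim\frac{\zeta(0)}{s-1}=-\frac{1}{2(s-1)}$. Yet $\mathcal{I}_k(x,s)$ itself is analytic at $s=1$, because near $t=0$ its bracket is bounded, $\frac{1}{xt^2}-\frac{1}{xt^2}+\dots$. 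Hence the polar parts cancel, and for $k\ge2$ so do the higher singular terms produced by differentiation, leaving the constant term of the Laurent expansion.

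To extract that constant we expand at $u=s-1$:
\[
\frac{x^{-u}\zeta(u)}{u}=\frac1u\sum_{m,l}\frac{(-\log x)^l}{l!}\frac{\zeta^{(m)}(0)}{m!}u^{l+m},\qquad \frac{\zeta(1+u)}{2}=\frac{1}{2u}+\frac12\sum_{n\ge0}\frac{(-1)^n\gamma_n}{n!}u^n.
\]
The $(k-1)$-th derivative at $u=0$ of the regular part is $(k-1)!$ times the coefficient of $u^{k-1}$. From the first series this gives the terms with $l+m=k$, namely $(k-1)!\sum_{l}\frac{(-1)^l\log^l x\,\zeta^{(k-l)}(0)}{l!(k-l)!}=\frac1k\sum_l(-1)^l\binom{k}{l}\log^l(x)\zeta^{(k-l)}(0)$. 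From the second series it gives $\frac12(-1)^{k-1}\gamma_{k-1}$. With the overall minus sign, these reproduce exactly the correction terms in \eqref{Generalized-Ramanujan-Period-Function}, which proves $\lim_{s\to1}\mathcal{I}_k(x,s)=\mathcal{F}_k(x)$.
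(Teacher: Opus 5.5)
Your proof is correct and follows the paper's route. You use the same split of the bracket in \eqref{2.4}, Lemma~\ref{Dixit's-Lemma}(2) with Lemma~\ref{Lemma:3.2} for the first piece, and the Laurent expansions of $\frac{x^{1-s}\zeta(s-1)}{s-1}$ and $\frac{\zeta(s)}{2}$ at $s=1$ for the remainder. Your explicit check that the residues $-\frac12$ and $\frac12$ cancel is a detail the paper leaves implicit.

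One justification is wrong: the bracket in \eqref{2.4} is not bounded near $t=0$. Its $1/t$ terms do cancel, but it equals $\frac{1}{xt^2}+O(1)$; for $x=1$ it is $e^t/(e^t-1)^2$. So \eqref{2.4} converges only for $\Re(s)>2$, and its continuation has a pole at $s=2$ coming from $\zeta(s-1)$. (The paper's claim that \eqref{2.4} is defined for $\Re(s)>0$ has the same defect.) Consequently $\lim_{s\to1}\mathcal{I}_k(x,s)$ must be read for the meromorphic continuation that your decomposition supplies. Its regularity at $s=1$ follows from $\zeta(0)=-\frac12$, exactly as you computed, not from any boundedness of the integrand.
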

\begin{proof}
We begin by decomposing the integral $\mathcal{I}_k(x,s)$ as
\begin{align}\label{modFkexp}
	\mathcal{I}_k(x,s)&= -\int_{0}^{\infty} \bigg( \frac{1}{e^t -1}+ \frac{1}{2}-\frac{1}{t}\bigg) \frac{1}{e^{xt}-1} \frac{d^{k-1}}{ds^{k-1}}\Bigg[ \frac{t^{s-1}}{\Gamma(s)}\Bigg] dt - \int_{0}^{\infty} \frac{1}{t(e^{xt}-1)} \frac{d^{k-1}}{ds^{k-1}}\Bigg[ \frac{t^{s-1}}{\Gamma(s)}\Bigg] dt \nonumber\\
	& \hspace{9.2cm} - \frac{1}{2} \int_{0}^{\infty} \frac{1}{e^t-1} \frac{d^{k-1}}{ds^{k-1}}\Bigg[ \frac{t^{s-1}}{\Gamma(s)}\Bigg] dt \nonumber\\
&= -\int_{0}^{\infty} \bigg( \frac{1}{e^t -1}+ \frac{1}{2}-\frac{1}{t}\bigg) \frac{1}{e^{xt}-1} \frac{d^{k-1}}{ds^{k-1}}\Bigg[ \frac{t^{s-1}}{\Gamma(s)}\Bigg]  dt  - \frac{d^{k-1}}{ds^{k-1}}\Bigg[\frac{x^{(1-s)}\zeta(s-1)}{s-1}\Bigg]\nonumber\\
&\hspace{10.5cm} -\frac{1}{2}\frac{d^{k-1}}{ds^{k-1}} \Bigg[ \zeta(s)\Bigg],
\end{align}
where the last step follows from the well-known integral representation  (cf. \cite[p. 251, Theorem 12.2]{Apostol})
\begin{align}\label{Gammazetaintegral}
\Gamma(s)\zeta(s)=\int_{0}^{\infty}\frac{t^{s-1}}{e^t-1} \ dt \quad \qquad (\Re(s)>1).
\end{align}
Letting $s \to 1$ on the both side of \eqref{modFkexp}, we invoke Lemma \ref{Dixit's-Lemma}, Lemma \ref{Lemma:3.2} on the first term and apply the Laurent series expansions at $s=1$ of the second and third term in right hand side of  \eqref{modFkexp} to conclude that
\begin{align*}
	 \lim_{s\to 1}\mathcal{I}_k(x,s)&=(-1)^{k-1}\mathfrak{F}_k^0(x)-\sum_{l=0}^{k}\frac{(-1)^l}{k}\binom{k}{l} \log^{l}(x)\zeta^{(k-l)}(0)-\frac{1}{2}(-1)^{k-1}\gamma_{k-1} =	\mathcal{F}_k(x). 
\end{align*}
\end{proof}
Lewis and Zagier \cite[p. 228]{Lewis} introduced variety of functions satisfying three term functional equation. Among them, one of the non-trivial example was the double sum
 \begin{align}\label{psi+}
	\psi^+_s(x)= {\sum_{m,n\ge 0}}{}^{{}^{\hspace{-.1cm}*}} \  \frac{1}{(mx+n)^{2s}},
\end{align}
which was defined for $\Re(s)>1$ and $x \in \mathbb{C} \setminus (-\infty,0]$. Here ${\sum}{}^{{}^{*}}$  means the term $m=n=0$ is to be omitted and the terms with either $m$ or $n$ equal to zero are to be counted with multiplicity $\frac{1}{2}$. Choie and Kumar \cite[p. 12]{Choie} extensively studied the analytic properties of the above function. As $s \to 1/2$, the function  $\psi^+_s(x)$ reduces to Ramanujan Period function $\mathcal{F}_1(x)$, which led the authors to term the function $\psi_s^+(x)$ as generalized Ramanujan period function. 

Now, for simplicity, we abbreviate the function $\psi_{\frac{s}{2}}^+(x)$ by $\Psi_{s}(x)$ for $\Re(s)>2$.
The following lemma produces a close relationship between the function $\Psi_s(x)$ and the integral $\mathcal{I}_k(x,s)$.
\begin{lemma}\label{Ik(s)-derivative-of-psi(x)}
For $\Re(s)>2$ and $\Re(x)>0$, we have
	\begin{align*}
		\mathcal{I}_k(x,s)=-\frac{d^{k-1}}{ds^{k-1}} \bigg[ \Psi_s(x) \bigg].
	\end{align*}
\end{lemma}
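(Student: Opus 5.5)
We will prove the identity first for $k=1$, as an equality of Mellin-type integrals, and then obtain general $k$ by differentiating $k-1$ times in $s$ under the integral sign. For $\Re(s)>2$ and $\Re(x)>0$ we expand the bracket in \eqref{2.4} into exponential series. Since
\begin{align*}
\frac{1}{(e^t-1)(e^{xt}-1)}=\sum_{m,n\ge 1}e^{-(mx+n)t},\qquad \frac{1}{e^{xt}-1}=\sum_{m\ge1}e^{-mxt},\qquad \frac{1}{e^{t}-1}=\sum_{n\ge1}e^{-nt},
\end{align*}
the bracket equals
\begin{align*}
{\sum_{m,n\ge 0}}{}^{{}^{\hspace{-.1cm}*}}\, e^{-(mx+n)t}.
\end{align*}
Here the terms with $m,n\ge1$ come with weight $1$, the terms with exactly one of $m,n$ equal to $0$ come with weight $\tfrac12$, and the term $m=n=0$ is absent. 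This is exactly the weighting in the definition \eqref{psi+} of $\psi^+_s$.

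Next we integrate termwise against $t^{s-1}/\Gamma(s)$, using
\begin{align*}
\frac{1}{\Gamma(s)}\int_0^\infty t^{s-1}e^{-(mx+n)t}\,dt=(mx+n)^{-s},
\end{align*}
which is valid because $\Re(mx+n)>0$ for $(m,n)\ne(0,0)$. This gives
\begin{align*}
\int_0^\infty\Big[\cdots\Big]\frac{t^{s-1}}{\Gamma(s)}\,dt={\sum}{}^{*}(mx+n)^{-s}=\psi^+_{s/2}(x)=\Psi_s(x),
\end{align*}
which is the case $k=1$ of the lemma, with the minus sign in \eqref{2.4} producing $-\Psi_s(x)$. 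The interchange of sum and integral is justified by Tonelli applied to the absolute values. The resulting majorant is
\begin{align*}
\Gamma(\sigma){\sum}^{*}|mx+n|^{-\sigma},\qquad \sigma=\Re(s)>2.
\end{align*}
It is finite because $|mx+n|\gg_x m+n$ for $\Re(x)>0$, so the double sum behaves like $\sum_{m,n}(m+n)^{-\sigma}$, which converges for $\sigma>2$.

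For general $k$ we differentiate $k-1$ times in $s$. We need
\begin{align*}
\frac{d^{k-1}}{ds^{k-1}}\int_0^\infty B(t)\,\frac{t^{s-1}}{\Gamma(s)}\,dt=\int_0^\infty B(t)\,\frac{d^{k-1}}{ds^{k-1}}\Big[\frac{t^{s-1}}{\Gamma(s)}\Big]dt,
\end{align*}
where $B(t)$ denotes the bracket in \eqref{2.4}. The $s$-derivatives of $t^{s-1}/\Gamma(s)$ are finite combinations of $t^{s-1}|\log t|^j$ multiplied by derivatives of $1/\Gamma$, which are locally bounded in $s$. So on a compact set $\{\sigma_1\le\Re(s)\le\sigma_2\}$ with $\sigma_1>2$ we need an integrable dominating function. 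Near $t=0$ we have $B(t)=O(t^{-2})$, so the integrand is $O(t^{\sigma_1-3}|\log t|^j)$, which is integrable because $\sigma_1>2$. Near $t=\infty$, $B(t)$ decays exponentially. Dominated convergence (or Morera together with analyticity in $s$) then shows that the integral is holomorphic in $s$ for $\Re(s)>2$ and allows differentiation under the integral sign. Combining this with the $k=1$ identity gives
\begin{align*}
\mathcal{I}_k(x,s)=-\frac{d^{k-1}}{ds^{k-1}}\Psi_s(x).
\end{align*}

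The main obstacle is the uniform domination near $t=0$. The singularity $t^{-2}$ of $B(t)$ is what forces the hypothesis $\Re(s)>2$, and we must make sure the logarithmic factors coming from the $s$-derivatives do not spoil integrability. They do not, because of the strict inequality $\sigma_1>2$.
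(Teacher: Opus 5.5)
Your argument is correct, but it reaches the key integral identity by a different route from the paper. The paper does not expand the bracket. Instead it quotes the Choie--Kumar representation (their Proposition 6.1), $\Psi_s(x)=\int_0^\infty\bigl(\frac{1}{e^t-1}+\frac12+\frac{x^s}{2}\bigr)\frac{t^{s-1}/\Gamma(s)}{e^{xt}-1}\,dt$. It splits off the $\frac{x^s}{2}$ piece and substitutes $t\mapsto t/x$ there, turning that piece into $\frac12\int_0^\infty\frac{t^{s-1}/\Gamma(s)}{e^{t}-1}\,dt$; this produces the symmetric bracket of \eqref{2.4}. It then differentiates $k-1$ times in $s$ without justifying the interchange. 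You derive the symmetric representation directly from the series \eqref{psi+}, by expanding the bracket into $\sum^* e^{-(mx+n)t}$ and taking a termwise Mellin transform. This in effect reproves the cited proposition, and you also supply the dominated-convergence argument for differentiating under the integral sign that the paper leaves implicit. Your version is self-contained and shows why the weights $\frac12$ and the condition $\Re(s)>2$ appear. The paper's version is shorter but rests on an external result.

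Two small repairs are needed. First, since $|e^{-(mx+n)t}|=e^{-(m\Re(x)+n)t}$, the Tonelli majorant is $\frac{\Gamma(\sigma)}{|\Gamma(s)|}\sum^*(m\Re(x)+n)^{-\sigma}$, not $\Gamma(\sigma)\sum^*|mx+n|^{-\sigma}$. The latter is smaller, so it is not a majorant. Convergence for $\sigma>2$ still follows from $m\Re(x)+n\ge\min(1,\Re(x))(m+n)$. Second, the strip $\{\sigma_1\le\Re(s)\le\sigma_2\}$ is not compact, and the derivatives of $1/\Gamma$ are unbounded on it. Work instead on a compact subset $K$ of $\Re(s)>2$ (for instance a closed disc) with $\sigma_1\le\Re(s)\le\sigma_2$ on $K$. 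There $C_K|B(t)|\,(t^{\sigma_1-1}+t^{\sigma_2-1})\sum_{i=0}^{k-1}|\log t|^i$ is an integrable dominating function, which is all that local holomorphy and differentiation under the integral require.
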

\begin{proof}
The integral representation of $\Psi_s(x)$ is given by \cite[Proposition 6.1]{Choie}
\begin{align}\label{psi-integral}
		\Psi_s(x)=\int_{0}^{\infty} \left(\frac{1}{e^t-1}+\frac{1}{2}+\frac{x^s}{2}\right)\frac{\frac{t^{s-1}}{\Gamma(s)}}{e^{xt}-1} dt,
	\end{align}	
which is valid for $\Re(s)>2$ and $\Re(x)>0$. We next split the above  integral into two parts and then substitute $t$ by $t/x$ in the second integral, to reduce the integral \eqref{psi-integral} as
	 \begin{align}
	 		\Psi_s(x)&=\int_{0}^{\infty} \left(\frac{1}{e^t-1}+\frac{1}{2}\right)\frac{\frac{t^{s-1}}{\Gamma(s)}}{e^{xt}-1} dt +\frac{x^s}{2}\int_{0}^{\infty} \frac{\frac{t^{s-1}}{\Gamma(s)}}{e^{xt}-1} dt \nonumber\\
	 &=\int_{0}^{\infty} \bigg[ \frac{1}{(e^t -1)(e^{xt}-1)}+\frac{1}{2(e^{xt}-1)}+ \frac{1}{2(e^t -1)}\bigg] \frac{t^{s-1}}{\Gamma(s)} dt.\nonumber		
	 \end{align}
Finally, by differentiating both sides $k-1$ times and applying the definition \eqref{modiFkx} of  $\mathcal{I}_k(x,s)$, one can conclude our lemma.	 
\end{proof}

We are now ready to prove the two term and the three term functional equation for $\mathcal{F}_k(x)$.
\subsection{\textbf{Proof of Theorem \ref{Periodlike}} }

The series definition \eqref{psi+} of $\psi^+_s(x)$ readily yields the  functional equation
	\begin{align*}
		\Psi_s(x)= x^{-s}\Psi_s\(\frac{1}{x}\).
	\end{align*}
Upon differentiating both sides $k-1$ times with respect to $s$, an application of Lemma \ref{Ik(s)-derivative-of-psi(x)} and general Leibnitz rule for differentiation provides
\begin{align*}
	\mathcal{I}_k(x, s)= \frac{1}{x^s} \sum_{j=1}^k \binom{k-1}{j-1} \log^{k-j}\(\frac{1}{x}\) \mathcal{I}_{j}\(\frac{1}{x}, s\).
\end{align*}
Therefore, by taking the limit as $s\to 1$ on both sides, Lemma \ref{modiFkx} reduces the above equation to \eqref{two term}. 

Lewis and Zagier in \cite{Lewis} established that the function $\Psi_s(x)$ also satisfies the three term relation:
\begin{align*}
	\Psi_s(x)-\Psi_s(x+1)-\frac{1}{(x+1)^{s}}\Psi_s(\frac{x}{x+1})=0.
\end{align*}
Performing $k-1$ times differentiation  with respect to $s$, we invoke Lemma \ref{Ik(s)-derivative-of-psi(x)} to reduce the above equation into
\begin{align*}
\mathcal{I}_k(x+1,s)=\mathcal{I}_k(x,s)-\frac{1}{(x+1)^s} \sum_{j=1}^{k}\binom{k-1}{j-1}  \log^{k-j}\(\frac{1}{x+1}\) \mathcal{I}_j\(\frac{x}{x+1},s\).
\end{align*}
Finally, by letting $s \to 1$ on the both sides, Lemma \ref{modiFkx} concludes \eqref{three term}. This completes the proof of the theorem. \qed

We next focus on the action of Hecke operator $\tilde{T_n}$ on the partial sum of $\mathcal{F}_k(x)$.

\subsection{\textbf{Proof of Theorem \ref{Heckeopaction}}}
We have from \cite[Theorem 6.2]{Choie} that the function $ \Psi_s(x)$ is the Hecke eigenform with respect to the Hecke operator $\tilde{T_n}$:
\begin{align*}
     (\Psi_s|_{s}\tilde{T}_n)(x)= n^{\frac{s}{2}}\sigma_{1-s}(n)\Psi_s(x).
\end{align*}
Therefore, by applying the definition \eqref{genslash} of the slash operator and the Hecke operator $\tilde{T_n}$, we can rewrite the above equation as
    \begin{align*}
       \sum_{\g}v_{\g}\frac{1}{(cx+d)^s}\Psi_s(\frac{ax+b}{cx+d})=\sum_{l|n}l^{1-s}\Psi_s(x).
    \end{align*}
We next perform $k-1$ times differentiation with respect to $s$ on both sides of the above equation to obtain
    \begin{align*}
        \sum_{\g}v_{\g}\sum_{j=1}^k\binom{k-1}{j-1}\frac{1}{(cx+d)^s}&\log^{k-j}\left(\frac{1}{cx+d}\right) \frac{d^{j-1}}{ds^{j-1}}\bigg[\Psi_s\left(\frac{ax+b}{cx+d}\right)\bigg]\\
       &= \sum_{j=1}^k\binom{k-1}{j-1}\sum_{l|n}l^{1-s}\log^{k-j}\left(\frac{1}{l}\right)\frac{d^{j-1}}{ds^{j-1}}\bigg[\Psi_s(x)\bigg].
    \end{align*}
Lemma \ref{Ik(s)-derivative-of-psi(x)} reduces the above equation into
\begin{align*}
\sum_{\g}v_{\g}\sum_{j=1}^k\binom{k-1}{j-1}\frac{1}{(cx+d)^s}\log^{k-j}\hspace{-.15cm}\left(\frac{1}{cx+d}\right) \mathcal{I}_j(x,s)
       = \sum_{j=1}^k\hspace{-.15cm}\binom{k-1}{j-1}\hspace{-.15cm}\sum_{l|n}l^{1-s}\log^{k-j}\hspace{-.15cm}\left(\frac{1}{l}\right)\hspace{-.15cm}\mathcal{I}_j(x,s).
\end{align*}    
Letting $s \to 1$ on the both sides and invoking Lemma \ref{modiFkx}, we arrive at 
\begin{align*}
        \sum_{\g}v_{\g}\sum_{j=1}^k\binom{k-1}{j-1}\frac{\sqrt{n}}{(cx+d)}\log^{k-j}\left(\frac{1}{cx+d}\right) &\mathcal{F}_j\left(\frac{ax+b}{cx+d}\right)\\
        &=\sqrt{n} \sum_{j=1}^k\binom{k-1}{j-1}\sum_{l|n}\log^{k-j}\left(\frac{1}{l}\right)\mathcal{F}_j(x).
 \end{align*}
Finally, we apply the definition \eqref{genslash} of the general slash operator and the definition of the Hecke operator $\tilde{T_n}$ to conclude our theorem.  \qed
\section{Application of $\mathfrak{F}_k^0(x)$ in the theory of integration}\label{Application}
In this section, we establish a relation between the function  $\mathfrak{F}_k^0(x)$ and an integral  $J_k(x) $, defined in \eqref{J_k-Function}. A straight forward application of Lemma \ref{Dixit's-Lemma} has been shown in the following lemma, which express $S_k\left(\frac{t}{2}\right)$ in terms of $S_k(t)$, defined in \eqref{Ipoly}.
\begin{lemma}\label{sktrelation}
	For any positive integer $k$, we have
	\begin{align*}
		S_k\(\frac{t}{2}\)=\sum_{j=1}^{k}\binom{k-1}{j-1}\log^{k-j}\(\frac{1}{2}\) S_j(t).
	\end{align*}
\end{lemma}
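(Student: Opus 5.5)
The plan is to work from the representation in part (2) of Lemma \ref{Dixit's-Lemma},
\begin{align*}
S_k(t)=\lim_{s\to1}\frac{d^{k-1}}{ds^{k-1}}\bigg[\frac{t^{s-1}}{\Gamma(s)}\bigg].
\end{align*}
Replacing $t$ by $t/2$ gives
\begin{align*}
S_k\(\frac{t}{2}\)=\lim_{s\to1}\frac{d^{k-1}}{ds^{k-1}}\bigg[\(\frac{1}{2}\)^{s-1}\frac{t^{s-1}}{\Gamma(s)}\bigg].
\end{align*}
This writes the function being differentiated as a product of two factors, $f(s)=2^{-(s-1)}=e^{(s-1)\log(1/2)}$ and $g(s)=t^{s-1}/\Gamma(s)$, both entire in $s$ (recall $1/\Gamma$ is entire). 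The whole proof therefore reduces to a Leibniz-rule computation.

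Next I apply the general Leibniz rule:
\begin{align*}
\frac{d^{k-1}}{ds^{k-1}}\big[f(s)g(s)\big]=\sum_{i=0}^{k-1}\binom{k-1}{i}f^{(k-1-i)}(s)\,g^{(i)}(s).
\end{align*}
Since $f^{(m)}(s)=\log^m(1/2)\,2^{-(s-1)}$, we get $f^{(m)}(1)=\log^m(1/2)$.

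Because every function involved is entire, the limit $s\to1$ is just evaluation at $s=1$. Applying part (2) of Lemma \ref{Dixit's-Lemma} once more, with $k$ replaced by $i+1$, gives $g^{(i)}(1)=S_{i+1}(t)$. Putting $j=i+1$ then yields
\begin{align*}
S_k\(\frac{t}{2}\)=\sum_{j=1}^{k}\binom{k-1}{j-1}\log^{k-j}\(\frac12\)S_j(t),
\end{align*}
which is the claimed identity.

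No step here is a real obstacle. The only point needing care is checking that the reindexing produces the binomial $\binom{k-1}{j-1}$ and the exponent $k-j$ exactly as stated. It is also worth noting that the same argument with $1/2$ replaced by any $c>0$ gives $S_k(ct)=\sum_{j=1}^k\binom{k-1}{j-1}\log^{k-j}(c)S_j(t)$, which I would keep in mind for the later manipulations in Theorem \ref{JrelF}.
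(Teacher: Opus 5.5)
Your proposal is correct and matches the paper's proof: substitute $t/2$ into part (2) of Lemma \ref{Dixit's-Lemma}, expand $2^{-(s-1)}\cdot t^{s-1}/\Gamma(s)$ with the general Leibniz rule, and evaluate at $s=1$. Your reindexing $j=i+1$ produces exactly $\binom{k-1}{j-1}\log^{k-j}(1/2)$, and the same argument does give the identity with $1/2$ replaced by any $c>0$.
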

\begin{proof}
Lemma \ref{Dixit's-Lemma} yields
	\begin{align*}
			S_k\left(\frac{t}{2}\right)= \lim_{s\to1} \frac{d^{k-1}}{ds^{k-1}}\bigg[ \frac{(t/2)^{s-1}}{\Gamma(s)}\bigg].
	\end{align*}
Therefore, by applying the general Leibnitz rule of differentiation on the right hand side of the above equation, we obtain 
	\begin{align*}
			S_k\left(\frac{t}{2}\right)&= \lim_{s\to1}\sum_{j=1}^{k}\binom{k-1}{j-1}\frac{1}{2^{s-1}}(-1)^{k-j}\log^{k-j}{(2)}\frac{d^{j-1}}{ds^{j-1}}\bigg[ \frac{t^{s-1}}{\Gamma(s)}\bigg]\\
			&=\sum_{j=1}^{k}\binom{k-1}{j-1}(-1)^{k-j}\log^{k-j}{(2)} S_j(t),
	\end{align*}
which concludes our lemma.
\end{proof}

We next express the integral $J_k(x)$ in terms of $\mathfrak{F}_k^0(x)$.
\subsection{\textbf{Proof of Theorem \ref{JrelF}}}
\begin{proof}
We first split the integral $J_k(x)$ as sum of three integrals namely,
\begin{align}\label{Jsplit}
	J_k(x) = I_k^1(x)+I_k^2(x)+I_k^3(x),
\end{align}
where
\begin{align}
   & I_k^1(x)=-\int_{0}^\infty \bigg[\frac{1}{1-e^{-t}}-\frac{1}{t}-\frac{1}{2} \bigg]\frac{S_k(t)}{1+e^{xt}}~dt,\nonumber\\
   & I_k^2(x)=\int_{0}^{\infty}\bigg[\frac{2}{1-e^{-2t}}-\frac{1}{t}-1\bigg]\frac{S_k(t)}{1+e^{xt}}~dt\qquad \text{and} \quad I_k^3(x)=\frac{1}{2}\int_{0}^\infty\frac{S_k(t)}{1+e^{xt}}dt.\nonumber
\end{align}
The integral representation of $\mathfrak{F}_k^0(x)$ from Lemma \ref{Lemma:3.2}, derives the integral $I_k^1(x)$ as
\begin{align}\label{Ik1-relation}
    I_k^1(x)
    &=-\int_{0}^\infty \bigg[\frac{1}{1-e^{-t}}-\frac{1}{t}-\frac{1}{2} \bigg] \frac{e^{xt}+1-2}{e^{2xt}-1} S_k(t) \ dt\nonumber\\
    &= 2 \int_{0}^\infty \bigg[\frac{1}{e^{t}-1}-\frac{1}{t}+\frac{1}{2} \bigg]\frac{S_k(t)}{e^{2xt}-1} \ dt - \int_{0}^\infty \bigg[\frac{1}{e^{t}-1}-\frac{1}{t}+\frac{1}{2} \bigg]\frac{S_k(t)}{e^{xt}-1} \ dt \nonumber\\
    &=(-1)^k\bigg[2\mathfrak{F}_k^0(2x)-\mathfrak{F}_k^0(x)\bigg].
\end{align}
Similarly, the integral $I_k^2(x)$ can be simplified to
\begin{align}\label{Ik2-relation}
	I_k^2(x)&= 2 \int_{0}^\infty \bigg[\frac{1}{e^{2t}-1}-\frac{1}{2t}+\frac{1}{2} \bigg]\frac{e^{xt}+1-2}{e^{2xt}-1} S_k(t) \ dt\nonumber\\
	&= \int_{0}^{\infty}\bigg[\frac{1}{e^{t}-1}-\frac{1}{t}+\frac{1}{2} \bigg]\frac{S_k(t/2)}{e^{xt/2}-1}~dt  - 2 \int_{0}^{\infty}\bigg[\frac{1}{e^{t}-1}-\frac{1}{t}+\frac{1}{2} \bigg]\frac{S_k(t/2)}{e^{xt}-1}~dt\nonumber\\
	&= \sum_{j=1}^{k}\binom{k-1}{j-1}\log^{k-j}\left(\frac{1}{2}\right)\bigg[\int_{0}^{\infty}\bigg[\frac{1}{e^{t}-1}-\frac{1}{t}+\frac{1}{2} \bigg]\frac{S_j(t)}{e^{xt/2}-1}dt \nonumber\\
	&\hspace{7cm}- 2 \int_{0}^{\infty}\bigg[\frac{1}{e^{t}-1}-\frac{1}{t}+\frac{1}{2} \bigg]\frac{S_j(t)}{e^{xt}-1}dt \bigg]\nonumber\\
	&=\sum_{j=1}^k (-1)^j \binom{k-1}{j-1}\log^{k-j}\(\frac{1}{2}\) \bigg[\mathfrak{F}_j^0\(\frac{x}{2}\)-2\mathfrak{F}_j^0(x)\bigg],
\end{align}
where in the second step, we make a change of variable $t$ by $\frac{t}{2}$ and in the penultimate step, we have applied Lemma \ref{sktrelation}. The integral representation \eqref{Gammazetaintegral} of $\Gamma(s)\zeta(s)$ along with Lemma \ref{Dixit's-Lemma} evaluates the integral $I_k^3(x)$ as
\begin{align}\label{Ik3-relation}
    I_k^3(x)
    &=\frac{1}{2}\int_{0}^\infty \frac{e^{xt}+1-2}{e^{2xt}-1} S_k(t) dt\nonumber\\
    &=\frac{1}{2}\lim_{s\to 1}\frac{d^{k-1}}{ds^{k-1}}\bigg[x^{-s}\zeta(s)-2^{1-s}x^{-s}\zeta(s)\bigg]\nonumber\\
    &=\frac{(-1)^k}{2kx}\bigg[\log^k x-\log^k2x\bigg]-\frac{(-1)^{k}}{2x}\sum_{l=0}^{k-1}\binom{k-1}{l}\gamma_{k-1-l}\bigg[\log^lx-\log^l2x\bigg],
\end{align}
where the final step follows from $(k-1)$ times differentiation of the Laurent series expansions of the functions appeared in the penultimate step and then evaluation at $s=1$. Finally, inserting \eqref{Ik1-relation}, \eqref{Ik2-relation} and \eqref{Ik3-relation} into \eqref{Jsplit}, we arrive at our result.
 \end{proof}
 
\subsection*{Acknowledgements} The first author’s research was partially supported by Anusandhan National Research
Foundation (ANRF) grant ANRF/ARGM/2025/000175/MTR of Govt. of India and the Cumulative Professional Development Allowance (CPDA) grant from the institute he is affiliated with. The second author is currently a Ph.D student at IIT Kharagpur and her research was supported by University Grants Commision (UGC), Govt. of India.

\subsection*{Data availability} Data sharing is not applicable to this article as no datasets were generated or analyzed during the current study.

\end{document}